\documentclass{article}






\usepackage[margin=1.5in]{geometry}
\usepackage[utf8]{inputenc} 
\usepackage[T1]{fontenc}    
\usepackage{hyperref}       
\usepackage{url}            
\usepackage{booktabs}       
\usepackage{amsfonts}       
\usepackage{nicefrac}       
\usepackage{microtype}      
\usepackage{xcolor, graphicx}         
\usepackage{amsthm, amsmath, amssymb}

\newtheorem{theorem}{Theorem}
\newtheorem{assumption}{Assumption}
\newtheorem{lemma}{Lemma}
\newtheorem{remark}{Remark}

\newcommand{\cl}{\mathcal}
\newcommand{\bb}{\mathbb}
\newcommand{\dr}{{\rm d}}
\newcommand{\bo}{\mathbf}

\title{On $L_2$-consistency of nearest neighbor matching}

%

\author{%
  James Sharpnack\thanks{Work done prior to joining Amazon.} \\
  Amazon AWS\\
  \texttt{jsharpna@gmail.com} \\
}
\begin{document}

\maketitle

\begin{abstract}
Biased sampling and missing data complicates statistical problems ranging from causal inference to reinforcement learning.  We often correct for biased sampling of summary statistics with matching methods and importance weighting.  In this paper, we study nearest neighbor matching (NNM), which makes estimates of population quantities from biased samples by substituting unobserved variables with their nearest neighbors in the biased sample.  We show that NNM is $L_2$-consistent in the absence of smoothness and boundedness assumptions in finite dimensions.  We discuss applications of NNM, outline the barriers to generalizing this work to separable metric spaces, and compare this result to inverse probability weighting.
\end{abstract}

\section{Introduction}

Issues of representation in sampling have plagued data analysis ever since the first survey was taken.
Biased sampling can be cast as a missing data problem, where data from the population of interest are partially missing, and data from the biased sample are non-missing.
Let $Y \in \bb R$ be the missing at random variable of interest, then we would like to estimate its population mean.
Suppose that we have covariates $X \in \bb R^p$ and we observe one iid sample, $\cl X_M$, from the missing population and a sample of $X,Y$ pairs, from the non-missing biased distribution (let $\cl X_N$ be the corresponding sample of $X$s).
When we know the missing density, $X \sim \mu$, and non-missing density, $X \sim \nu$ (to remember this think {$\mu$}issing, and $\nu$on-missing), we can estimate population level statistics with inverse probability weighting (IPW) and trimmed variants \cite{horvitz1952generalization, ma2020robust}.
When $\mu,\nu$ are unknown, our setting, we typically use matching methods or we estimate the density ratio and plug it into IPW.
These methods construct a weight $W_j$ for each element $X_j \in \{X_1, \ldots, X_n\} = \cl X_N$ that is dependent on $\cl X_M, \cl X_N$, and form an estimate $\hat G = \sum_{j=1}^n W_j Y_j$.
In this work, we study one of the simplest matching methods, nearest neighbor matching (NNM), which sets $W_j$ to be the proportion of elements in $\cl X_M$ for which $X_j$ is its nearest neighbor within $\cl X_N$.
We will show that it is consistent under minimal conditions.

{\bf Contributions.} Our theory is broken down into three settings of increasing difficulty and realisticness:
(1) Known $\mu$, noiseless $Y$, the only source of randomness is in the non-missing sample $\cl X_N$;
(2) Unknown $\mu$, noiseless $Y$;
(3) Unknown $\mu$, noisy $Y$, this is the standard setting of NNM.
One key lemma to establish (3) is related to the $\mu$ measure of Voronoi cells from our biased sample, and we provide a more complete characterization which may be of separate interest.
Further, we discuss the conditions and contrast NNM to IPW, highlight the barriers to generalize the theory beyond finite dimensional spaces, and discuss applications to missing data problems.

\subsection{Main Theoretical Results}

If the missingness is completely at random (MCAR) then this means that there is no dependence between the missingness and the variables $X,Y$.
In this case, nothing special needs to be done in order to consistently estimate the mean of the missing $Y$, we can just compute their empirical counterparts on the non-missing data.
Instead, we will assume we are in the more realistic situation, that our data is {\em missing at random} (MAR), which means that the missingness is independent of $Y$ conditional on the covariate $X$.

We observe iid covariates $\cl X_M \subset \bb R^p$ from density $\mu$, and $\cl X_N = \{ X_1, \ldots, X_n \} \subset \bb R^p$ from density $\nu$.
Throughout we will assume that the $X$ variables are continuous and $p$-dimensional.
We will discuss later the numerous mathematical challenges of generalizing beyond this setting.
For the non-missing data, we observe $Y_1 ,\ldots, Y_n$ from the common distribution of $Y_j \sim f_{Y|X}(.|X_j)$ (due to the MAR assumption).
Our goal is to estimate,
\[
G := \bb E[Y | \textrm{missing}] = \int y \cdot f_{Y|X}(y | x) \mu(x) \dr x.
\]

NNM can be expressed using Voronoi cells.
A Voronoi cell contains the points in $\bb R^p$ such that $X_j$ is the closest member of $\cl X_N$,
\[
S_j := \left\{ x \in \bb R^p : \| X_j - x \| = \min_{k \in [n]} \| X_k - x \| \right\},
\]
where $[n] := \{1,\ldots,n\}$ (we will ignore ties because they are measure $0$).
Let $\hat M(S) = |\cl X_M \cap S| / |\cl X_M|$ be the proportion of $\cl X_M$ within $S$.
NNM estimates $G$ by $\hat G = \sum_{j=1}^n W_j Y_j$ where $W_j = \hat M(S_j)$.
Throughout, define the measures $M(S) = \int_S \mu(x) \dr x$ and $N(S) = \int_S \nu(x) \dr x$.

{\bf Known $\mu$, noiseless $Y$}.  Throughout we will let $\eta(X) = \bb E[Y | X]$, the true regression function.
We require that $\eta$ is integrable, and will place moment conditions on it later.
In the noiseless setting $Y = \eta(X)$ almost surely (AS).
If we know $\mu$ then we do not need to rely on a finite sample $\cl X_M$, instead we use the weight $W_j = M(S_j)$.
In this special case, we will call the NNM estimator the {\em 1NN measure} of $\eta$ and denote it,
\begin{equation}
\label{eq:Q}
\bb Q_1 (\eta) := \sum_{j=1}^n M(S_j) \cdot \eta(X_j) = \int \eta(X_{(1)}(x)) \, \mu(x) \, \dr x,
\end{equation}
where $X_{(1)}(x)$ is the nearest neighbor of $x$ within $\cl X_N$.
One should think of $\bb Q_1$ as a biased sampling analogue to the empirical measure (typically denoted $\bb P_n$).

Recall that the Renyi divergence for $q_0 > 1$ is
\[
D_{q_0}(\mu || \nu) = \frac{1}{q_0 - 1} \ln \int \left(\frac{\mu(x)}{\nu(x)} \right)^{q_0} \nu(x) \dr x.
\]
Furthermore, we can take $q_0 \rightarrow 1$ to obtain the KL-divergence ($D_1$).
Throughout we will assume that $q_0,q_1 \ge 1$ are H\"older conjugates ($q_0^{-1} + q_1^{-1} = 1$).

\begin{assumption}
\label{as:measure_1}
Let $q_0 \ge 1$ be a constant then assume that the Renyi divergence is finite,
\[
D_{q_0}(\mu || \nu) < \infty.
\]
\end{assumption}

\begin{assumption}
\label{as:measure_2}
Let $q_1 \ge 1$.
The test function $\eta$ is measurable and has finite $2 q_1$ moment,
\[
\int |\eta(x)|^{2 q_1} \nu(x) \dr x < \infty.
\]
\end{assumption}

Notice that a bounded $D_{q_0}(\mu || \nu)$ is much less restrictive then assuming that the density ratio is bounded uniformly.  Furthermore, we make no smoothness assumptions on $\eta$.

\begin{theorem}
\label{thm:1NN_SLLN}
Under Assumptions \ref{as:measure_1}, \ref{as:measure_2}, the 1NN measure is consistent for $\mu$, namely,
\[
\bb Q_1 (\eta) \rightarrow \int \eta(x) \cdot \mu(x) \dr x,
\]
in $L^2$ norm as $n \rightarrow \infty$.
\end{theorem}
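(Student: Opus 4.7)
The plan is to prove $L^2$ convergence by approximating $\eta$ with bounded, continuous, compactly supported functions $\tilde\eta$ and decomposing
\[
\bb Q_1(\eta) - \int \eta \, \dr \mu_0
= \bb Q_1(\eta - \tilde\eta) + \Big(\bb Q_1(\tilde\eta) - \int \tilde\eta \, \dr \mu_0 \Big) - \int (\eta - \tilde\eta) \, \dr \mu_0.
\]
The third (deterministic) term is controlled by H\"older's inequality using Assumptions \ref{as:measure_1} and \ref{as:measure_2}. The middle term reduces to the core nearest-neighbor convergence. The first term is the main technical challenge, requiring a uniform-in-$n$ bound on the second moment of $\bb Q_1$ acting on the tail of $\eta$.

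For the middle term, the key observation is that for every $x$ in the support of $\mu_1$, $X_{(1)}(x) \to x$ almost surely: for any $\epsilon > 0$ the probability $(1 - \mu_1(B(x,\epsilon)))^n$ is summable in $n$, so Borel-Cantelli applies, and since $\mu_0 \ll \mu_1$ this holds $\mu_0$-a.e. For bounded continuous $\tilde\eta$, the integrand $\tilde\eta(X_{(1)}(x))$ is uniformly bounded and converges pointwise to $\tilde\eta(x)$, so bounded convergence applied first under the $\mu_0$ integral (giving almost sure convergence of $\bb Q_1(\tilde\eta)$) and then in expectation yields $\bb Q_1(\tilde\eta) \to \int \tilde\eta \, \dr \mu_0$ in $L^2$.

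For the approximation term, writing $\phi = \eta - \tilde\eta$, Jensen's inequality with the probability measure $\mu_0$ gives
\[
\bb Q_1(\phi)^2 \leq \int \phi(X_{(1)}(x))^2 \, \dr \mu_0(x) = \sum_{j=1}^n \phi(X_j)^2 \mu_0(S_j),
\]
whose expectation, by exchangeability of the $X_j$, equals $n \int \phi(y)^2 f_1(y) \bb E[\mu_0(S_1) \mid X_1 = y] \, \dr y$. The crucial step is a Stone-type estimate $n \, \bb E[\mu_0(S_1) \mid X_1 = y] \leq C \, g(y)$ with $g = f_0/f_1$ and $C$ depending only on $p$. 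Granting this, $\bb E\, \bb Q_1(\phi)^2 \leq C \int \phi^2 \, \dr \mu_0$, and H\"older's inequality with conjugate exponents $(q_0, q_1)$ gives $\int \phi^2 \, \dr \mu_0 = \int \phi^2 g \, \dr \mu_1 \leq \|\phi\|_{L^{2q_1}(\mu_1)}^2 \cdot \|g\|_{L^{q_0}(\mu_1)}$. The second factor is finite by Assumption \ref{as:measure_1}, while the first vanishes as $\tilde\eta \to \eta$ in $L^{2q_1}(\mu_1)$ by Assumption \ref{as:measure_2} combined with dominated convergence.

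The main obstacle is the Stone-type estimate $n \, \bb E[\mu_0(S_1) \mid X_1 = y] \leq C g(y)$. The Voronoi cell $S_1$ can extend far from $X_1$ in directions where $\mu_1$-samples are sparse, so the classical Stone lemma (bounding how many query points can share a common 1NN in Euclidean geometry) must be combined with a direct measure-ratio comparison on balls. I expect the argument to proceed via a layer-cake/tail representation of $\mu_0(S_1)$ together with the representation of $S_1$ as an intersection of Voronoi half-spaces, using the fact that $\mu_0(B(y,r))/\mu_1(B(y,r)) \to g(y)$ at Lebesgue points of $g$, and leveraging Assumption \ref{as:measure_1} to dominate contributions from atypical balls uniformly in $n$.
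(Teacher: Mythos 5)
Your overall strategy (approximate $\eta$ by bounded continuous compactly supported $\tilde\eta$, handle the nice part by almost-sure convergence of $X_{(1)}(x)\to x$ plus bounded convergence, and control the tail uniformly in $n$) is a genuinely different route from the paper, which instead splits the problem into asymptotic unbiasedness (via the limit $n\,\bb E[\mu_0(S_1)\mid X_1=x]\to f_0(x)/f_1(x)$ and dominated convergence) and vanishing variance (via Efron--Stein, reducing to $\bb E|\eta(X_{(1)})-\eta(X_{(2)})|^{2q_1}\to 0$). Your route could in principle work, but it hinges entirely on the step you yourself flag as the main obstacle, and that step is false as stated. The pointwise, uniform-in-$n$ bound $n\,\bb E[\mu_0(S_1)\mid X_1=y]\le C\,f_0(y)/f_1(y)$ cannot hold: take $y$ with $f_0(y)=0$ but $f_1(y)>0$ (e.g.\ $\mu_1$ uniform on $[0,1]$, $\mu_0$ uniform on $[0.9,1]$, $y=0.5$). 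The right-hand side is $0$, yet for every finite $n$ the Voronoi cell of $X_1=y$ captures a set of positive $\mu_0$-measure with positive probability, so the left-hand side is strictly positive. More generally the Voronoi cell of $X_1$ harvests $\mu_0$-mass from regions far from $X_1$ where the density ratio may be large, so no pointwise comparison with $f_0(y)/f_1(y)$ that is uniform in $n$ is available; only the limiting statement (the paper's Lemma \ref{lem:voronoi}) holds, and a limit does not give the uniform-in-$n$ domination your tail bound needs.

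The repair is to abandon the pointwise estimate and bound the integrated quantity directly. With $\phi=\eta-\tilde\eta$ you need $\sup_n \bb E\,\bb Q_1(\phi^2)$ small; write
\[
\bb E\,\bb Q_1(\phi^2)=\bb E\int \phi^2(X_{(1)}(x))\,\frac{f_0(x)}{f_1(x)}\,\dr\mu_1(x)\le \left(\int \left(\tfrac{f_0}{f_1}\right)^{q_0}\dr\mu_1\right)^{1/q_0}\left(\bb E\int \phi^{2q_1}(X_{(1)}(x))\,\dr\mu_1(x)\right)^{1/q_1},
\]
and now the second factor involves a query point and data drawn from the \emph{same} measure $\mu_1$, so the genuine Stone estimate (the paper's Lemma \ref{lem:stone0}) gives $\bb E\,\phi^{2q_1}(X_{(1)}(X))\le \gamma_p\,\bb E\,\phi^{2q_1}(X)$ uniformly in $n$; this is exactly the paper's inequality \eqref{eq:holder_Q1}. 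With that substitution your decomposition closes: the tail term is $O(\|\phi\|_{L^{2q_1}(\mu_1)}^2)$ uniformly in $n$, the deterministic term is handled by the same H\"older step, and the middle term follows from your Borel--Cantelli argument (made rigorous via Fubini to pass from fixed-$x$ almost-sure convergence to an almost-sure event on which convergence holds for $\mu_0$-a.e.\ $x$). As written, however, the proof has a genuine gap because its key lemma is not true.
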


{\bf Unknown $\mu$, noiseless $Y$.} The main difference between this setting and the previous one is that the NNM weights require an estimate $\hat M(S_j)$ of $M(S_j)$.
The following is a relatively simple corollary to Theorem \ref{thm:1NN_SLLN}, but we state it here because it highlights the additional assumptions in this setting.

\begin{theorem}
\label{thm:noiseless_NNM}
Under Assumptions \ref{as:measure_1}, \ref{as:measure_2}, the NNM estimate is consistent for noiseless $Y$, namely,
\[
\hat G = \sum_{j=1}^n \hat M(S_j) \cdot \eta(X_j) \rightarrow \int \eta(x) \cdot \mu(x) \dr x,
\]
in $L^2$ norm as $n,m \rightarrow \infty$.
\end{theorem}

{\bf Unknown $\mu$, noisy $Y$.}
In this problem, we assume that $Y_j = \eta(X_j) + \epsilon_j$ where $\{\epsilon_j\}_{j=1}^n$ are independent with mean $0$ and variance bounded by $V < \infty$.
This result hinges on our characterization of the Voronoi cells, and requires that the Chi-square divergence is finite.

\begin{theorem}
\label{thm:noisy_NNM}
Under Assumptions \ref{as:measure_1}, \ref{as:measure_2} for $q_0 = q_1 = 2$, 
\[
\hat G = \sum_{j=1}^n \hat M(S_j) \cdot Y_j \rightarrow G,
\]
in $L^2$ norm as $n,m \rightarrow \infty$.
\end{theorem}

In the following section, we will contrast and relate these results to prior work.

\subsection{Comparison to prior work}

Matching methods for causal inference and missing data are appealing due to their relative 
simplicity, interpretability, and computational efficiency \cite{heckman1998matching}.
Matching can be done with replacement, where multiple missing samples can match to the same non-missing samples, such as NNM, or without replacement, such as optimal matching \cite{rosenbaum1989optimal}; it has been shown that in some circumstances matching without replacement is inconsistent \cite{savje2019inconsistency}.
Increasingly, NNM has surfaced in machine learning applications such as covariate shift correction in classification \cite{loog2012nearest}, model based conditional independence testing \cite{sen2017model}, and deep clustering \cite{dang2021nearest}.
NNM is amenable to massive data applications due to fast approximate NN indexing and retrieval (for example, \cite{muja2014scalable,malkov2018efficient}), and software for NNM and extensions have been developed \cite{abadie2004nnmatch}.


The statistical efficiency of NNM has been studied in statistics and econometrics literature.
In \cite{abadie2006large}, it was shown that despite its popularity, in more than 1 dimensions NNM (and other similar matching methods) has a bias term that converges at a rate of $n^{-1/p}$, markedly worse than the optimal $n^{-1/2}$ rate under Lipschitz continuity assumptions and bounded propensity score (the conditional probability of missingness).
For this reason, corrective measures have been studied such as bias-correction, \cite{abadie2011bias}, where an overparametrized linear model is used to construct an additive bias estimate.
Another work in response to the negative results of \cite{abadie2006large} form an estimate of the propensity score and use NNM in this 1 dimensional space \cite{frolich2004finite, busso2014new}, however, this requires an accurate estimate of the propensity score.
To the best of our knowledge, it remained unknown if NNM is even consistent without smoothness assumptions or bounded density ratio (the results in this work).


As we will see, Theorem \ref{thm:1NN_SLLN} relies on existing results on nearest neighbor regression theory.
To illustrate this under stronger assumptions---$\eta$ is $L$-Lipschitz and bounded---then $|\eta(X_{(1)}(x)) - \eta(x)| \le L \| X_{(1)}(x) - x \|$. 
We know by classical studies of KNN, \cite{cover1967nearest}, that the 1NN approaches the test point and boundedness gives us dominated convergence.
Since, this work much has been discovered about KNN that we can potentially stand on the shoulders of.
Of direct relevance to this work is \cite{forzani2012consistent}, which studies the $L_2$ consistency of KNN regression in general metric spaces.
Also, intermediate results from \cite{gyorfi2021universal, hanneke2021universal} are relevant as well, particularly the density of Lipschitz functions in $L_1(\nu)$ for separable metric spaces.
We will argue in Section \ref{sec:closer} that, while promising, these results are insufficient to prove our desired Theorems.

We will see that to prove the final result, Theorem \ref{thm:noisy_NNM}, we will require a characterization of the Voronoi cells.
\cite{devroye2017measure} provides an analogous result for unbiased sampling, and we extend this result to the biased sampling setting.
They find that $\bb E[M(S_j) | X_j = x] \rightarrow 1$ when $\mu = \nu$ and bound the limiting second moment.
We extend this to find that $\bb E[M(S_j) | X_j = x] \rightarrow \mu(x) / \nu(x)$, which implies that NNM is unbiased for IPW.  As mentioned, by \cite{abadie2006large} we know that this bias converges at a suboptimal rate under smoothness and boundedness assumptions.  It is unknown, and outside of the scope of this work, if without these assumptions the optimal rate remains $n^{-1/2}$ or if this is information theoretically impossible.

\section{Asymptotic measure of Voronoi cells}

The main observation that we make in this section is that
the $\mu$-measure of a Voronoi cell of samples from $\nu$ approaches the density ratio.
These results rely heavily on the finite dimensional setting, and it is used to prove NNM consistency in the unknown $\mu$, noisy $Y$ case (Theorem \ref{thm:noisy_NNM}).
In fact, that result only requires Lemma \ref{lem:voronoi} \eqref{eq:voronoi_sqr}, but we provide our full result here because it is enlightening.
This result can be interpreted as NNM is unbiased for importance sampling in the limit, since $M(S_j)$ is the expected importance weight.
To make this leap, we need to be specific regarding our Lebesgue points, which is a $\nu$ measure 1 set over which this limit holds.  We follow this with our Voronoi cell result.

\begin{lemma}
\label{lem:lebesgue_pts}
For any probability measure $\Pi$ with a density over $\bb R^p$.  There exists a set $\cl X$ such that $\Pi(\cl X) = 1$ and the following properties hold:
\begin{enumerate}
\item[(1)] For any $x \in \cl X$ and $x' \ne x$, $\Pi(B(x',\|x' - x \|)) > 0$ and
\item[(2)] for any $x \in \cl X$ and any $\delta > 0$ there exists an $\zeta > 0$ depending on $x$ such that if $\Pi(B(x', \|x - x'\|)) \le \zeta$ for some $x' \ne x$ then we have that $\|x - x'\| < \delta$.
\end{enumerate}
\end{lemma}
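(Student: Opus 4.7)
The plan is to take $\cl X$ to be the set of Lebesgue points $x$ of the density $f$ at which $f(x) > 0$. By the Lebesgue differentiation theorem the non--Lebesgue points form a $\lambda$-null set, hence a $\mu$-null set (because $\mu$ has a density), and $\mu(\{f = 0\}) = 0$ trivially, so $\mu(\cl X) = 1$.

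For (1), fix $x \in \cl X$ and $x' \ne x$, and set $R = \|x' - x\|$ and $u = (x' - x)/R$. A short calculation (expand $\|w - Ru\|^2 = \|w\|^2 - 2R\langle w,u\rangle + R^2$) shows $y \in B(x', R)$ is equivalent to $\|y-x\|^2 < 2R\langle y-x, u\rangle$. Hence, for any $0 < r \le R$, the set $B(x,r) \cap B(x',R)$ contains the spherical cap $\{x + w : \|w\| < r,\ \langle w,u\rangle \ge r/2\}$ (since $\|w\|^2 \le r^2 = 2R \cdot r^2/(2R) \le 2R \cdot r/2$), and this cap has Lebesgue measure at least $c_p r^p$ for a dimension-dependent constant $c_p > 0$. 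Combining this with the Lebesgue-point identity $\int_{B(x,r)} |f(y) - f(x)| \dr y = o(r^p)$ and $f(x) > 0$ yields $\mu(B(x',R)) \ge \mu(B(x,r) \cap B(x',R)) \ge (f(x)/2) c_p r^p > 0$ for all sufficiently small $r$.

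For (2), fix $\delta > 0$ and parameterize $x' \ne x$ with $\|x' - x\| = r \ge \delta$ as $x' = x + ru$, $u \in S^{p-1}$. The key reduction is monotonicity: $B(x + r_1 u, r_1) \subset B(x + r_2 u, r_2)$ whenever $r_1 \le r_2$. To see this, any $z$ in the smaller ball satisfies $\langle z - x, u\rangle > 0$ (from $\|z-x-r_1 u\|^2 < r_1^2$, which rearranges to $\|z-x\|^2 < 2r_1 \langle z-x,u\rangle$), and then $\|z - x - r_2 u\|^2 = \|z-x-r_1 u\|^2 - 2(r_2 - r_1)\langle z-x,u\rangle + (r_2^2 - r_1^2) < r_2^2$. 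Thus it suffices to bound $\inf_{u \in S^{p-1}} \mu(B(x + \delta u, \delta))$ below. By (1) each such value is strictly positive, and $u \mapsto \mu(B(x + \delta u, \delta))$ is continuous on $S^{p-1}$ by dominated convergence (the spherical boundary $\{\|y - x - \delta u\| = \delta\}$ is $\lambda$-null, hence $\mu$-null). Since $S^{p-1}$ is compact, the infimum is attained and positive, and any $\eta$ strictly below it certifies the claim.

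The main obstacle is the uniformity in (2): part (1) delivers only pointwise-in-$x'$ positivity, and it is the combination of the nested-ball monotonicity in the radial parameter $r$ with a compactness-plus-continuity argument over directions $u \in S^{p-1}$ that upgrades this into the quantitative statement needed by the downstream proof of Theorem \ref{thm:1NN_SLLN}.
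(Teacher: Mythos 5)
Your proof is correct, but it takes a genuinely different route from the paper's, most notably in how the full-measure set is produced. The paper defines $\cl X$ as the complement of the bad set $\cl A = \{x : \mu(B(x',\|x'-x\|)) = 0 \text{ for some } x' \ne x\}$, so that (1) holds by fiat, and then must prove $\mu(\cl A)=0$: it covers the interiors of the offending null balls by a countable (Lindel\"of) subfamily and shows the residual $\cl A \setminus \cl Z$ is $\sigma$-porous, hence Lebesgue-null. You instead take $\cl X$ to be the Lebesgue points of the density with $f(x)>0$ --- full measure is then immediate --- and verify (1) directly via the spherical-cap inclusion together with $\int_{B(x,r)}|f-f(x)|\,\dr\lambda = o(r^p)$. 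Your construction is more elementary and sidesteps the porosity machinery, which is the least transparent step of the paper's argument; the paper's construction yields the maximal set on which (1) holds and never needs to identify Lebesgue points explicitly. (Your choice costs nothing downstream, since the proof of Lemma \ref{lem:voronoi} invokes the Lebesgue differentiation theorem at $x$ anyway.) For (2), both arguments rest on the same key reduction --- the nested-ball monotonicity $B(x+\delta u,\delta)\subset B(x+ru,r)$ for $r\ge\delta$, which the paper uses implicitly when it replaces arbitrary far-away centers by points $z_l$ at distance exactly $\delta$ --- but you conclude by continuity of $u\mapsto \mu(B(x+\delta u,\delta))$ on the compact sphere (via dominated convergence and the $\mu$-nullity of sphere boundaries) and take $\eta$ below the attained positive infimum, whereas the paper argues by contradiction, extracting a Bolzano--Weierstrass accumulation point of a sequence of small-measure balls and deriving a contradiction with property (1). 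Your direct compactness argument is cleaner, quantifies $\eta$ explicitly, and avoids the slightly delicate containment of ${\rm int}(B(z',\delta))$ in every tail union $A_m$ that the paper relies on.
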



\begin{lemma}
\label{lem:voronoi}
Under Assumption \ref{as:measure_1}, in expectation, the $M$-measure of a Voronoi cell around $X_1$ conditional on $X_1$ converges to the density ratio in the limit, namely,
\begin{equation}
\label{eq:voronoi}
\lim_{n \rightarrow \infty} n \bb E [M(S_1) | X_1 = x] = \frac{\mu(x)}{\nu(x)},
\end{equation}
for $\nu$-almost all $x$ (where the Lebesgue points are those described in Lemma \ref{lem:lebesgue_pts} for $\Pi = N$).
Furthermore, we have the following bound,
\begin{equation}
\label{eq:voronoi_sqr}
\limsup_{n \rightarrow \infty} n^2 \bb E [M^2(S_1) | X_1 = x] \le 2 \left(\frac{\mu(x)}{\nu(x)} \right)^2.
\end{equation}
\end{lemma}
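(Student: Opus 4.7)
First I would rewrite the conditional expectation as an integral. Conditional on $X_1 = x$, the remaining $X_j$ are i.i.d.\ $\mu_1$, and $z \in S_1$ iff none of them lies strictly inside $B(z, \|z - x\|)$; thus by Fubini
\[
\bb E[\mu_0(S_1) \mid X_1 = x] = \int (1 - \mu_1(B(z, \|z - x\|)))^{n-1} \, \dr \mu_0(z).
\]
I would then rescale via $z = x + u / n^{1/p}$, so $\dr \mu_0(z) = f_0(x + u/n^{1/p}) \dr u / n$, and write $B_n(u) = B(x + u/n^{1/p}, \|u\|/n^{1/p})$. Letting $V_p$ denote the volume of the Euclidean unit ball, this yields
\[
n \, \bb E[\mu_0(S_1) \mid X_1 = x] = \int (1 - \mu_1(B_n(u)))^{n-1} f_0(x + u/n^{1/p}) \, \dr u.
\]

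For $x \in \cl X$ a joint Lebesgue point of $f_0$ and $f_1$, I would observe that $B_n(u) \subset B(x, 2\|u\|/n^{1/p})$ with volume ratio $2^{-p}$, so the Lebesgue differentiation theorem delivers $n \mu_1(B_n(u)) \to f_1(x) V_p \|u\|^p$ and $f_0(x + u/n^{1/p}) \to f_0(x)$; the integrand converges pointwise to $f_0(x) \exp(-f_1(x) V_p \|u\|^p)$. To promote this to integral convergence via dominated convergence, I would split the $z$-domain at $\|z - x\| = \delta$: on the outer piece, property (2) of Lemma \ref{lem:lebesgue_pts} gives $\mu_1(B(z, \|z - x\|)) \ge \eta(\delta) > 0$, so the contribution is at most $n(1 - \eta)^{n-1} \to 0$; on the inner piece, the half-ball inclusion $B(z, \|z - x\|) \supset B((x+z)/2, \|z - x\|/2)$ together with LDT at $x$ gives the uniform lower bound $n \mu_1(B_n(u)) \ge c \|u\|^p$ for $n$ large, yielding an integrable envelope of the form $C \exp(-c' \|u\|^p)$ once $f_0$ is controlled in a neighborhood of $x$. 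The limit integral $f_0(x) \int \exp(-f_1(x) V_p \|u\|^p) \dr u$ evaluates to $f_0(x)/f_1(x)$ via polar coordinates and the substitution $t = V_p f_1(x) r^p$.

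For the second bound, the same conditioning argument gives
\[
\bb E[\mu_0^2(S_1) \mid X_1 = x] = \iint (1 - \mu_1(B(z_1, r_1) \cup B(z_2, r_2)))^{n-1} \, \dr \mu_0(z_1) \, \dr \mu_0(z_2), \quad r_i = \|z_i - x\|.
\]
By symmetry I would split on $r_1 \le r_2$ and use the crude bound $\mu_1(B_1 \cup B_2) \ge \mu_1(B_2)$, reducing the right-hand side to $2 \int (1 - \mu_1(B(z_2, r_2)))^{n-1} \mu_0(B(x, r_2)) \, \dr \mu_0(z_2)$. Since $\mu_0(B(x, r_2)) \le (1 + o(1)) f_0(x) V_p r_2^p$ for small $r_2$, the same scaling and DCT argument delivers the limit $2 f_0(x)^2 V_p \int \|u\|^p \exp(-f_1(x) V_p \|u\|^p) \, \dr u = 2 f_0(x)^2 / f_1(x)^2$, by one more polar-coordinate integral. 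The main obstacle is the DCT envelope: the balls $B(z, \|z - x\|)$ are off-center relative to $x$, so Lebesgue differentiation at $x$ does not directly apply; combining property (2) of Lemma \ref{lem:lebesgue_pts} (to kill the far regime) with the half-ball inclusion (to control the near regime) is precisely what makes the integrable majorant construction work, especially since Assumption \ref{as:measure_1} only controls a Renyi integral and not the sup of $f_0/f_1$.
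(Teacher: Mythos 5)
Your overall architecture differs from the paper's: you blow up around $x$ with the scaling $z = x + u/n^{1/p}$ and aim for a pointwise-limit-plus-DCT argument with the explicit limit $f_0(x)\exp(-f_1(x)V_p\|u\|^p)$, whereas the paper keeps everything in terms of the random variable $Z(x) = \mu_1(B(X_0,\|X_0-x\|))$ with $X_0 \sim \mu_0$, integrates by parts to get $n(n-1)\int_0^1 \bb P\{Z(x)\le z\}(1-z)^{n-2}\,\dr z$, and exploits that $Z_0(x)=\mu_0(B(x,\|X_0-x\|))$ is exactly uniform$(0,1)$. The uniformity trick is not cosmetic: it absorbs the density $f_0$ near $x$ exactly, so the paper never has to evaluate or control $f_0$ at displaced points.

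That is where your argument has a genuine gap. You assert that $f_0(x+u/n^{1/p}) \to f_0(x)$ pointwise in $u$. At a mere Lebesgue point this is false: being a Lebesgue point controls the shrinking-ball averages of $|f_0 - f_0(x)|$, not the values of $f_0$ along a particular sequence of displaced points, and $f_0$ need not be continuous anywhere. (One can have $f_0 = f_0(x)+1$ on a set of density zero at $x$ that still meets $\{x+u/n^{1/p}\}$ for infinitely many $n$.) The same issue undermines your dominated-convergence step: since $f_0$ need not be locally bounded near $x$, there is in general no pointwise integrable majorant of the form $C\exp(-c'\|u\|^p)$ for the integrand $(1-\mu_1(B_n(u)))^{n-1} f_0(x+u/n^{1/p})$, even though its integral is well behaved. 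The repair is either to do what the paper does (work with $\dr\mu_0$ directly and use the probability integral transform), or to replace your pointwise claims by the $L^1_{\rm loc}$ statement $\int_{\|u\|\le R}|f_0(x+u/n^{1/p})-f_0(x)|\,\dr u = n\int_{B(x,Rn^{-1/p})}|f_0-f_0(x)|\,\dr\lambda \to 0$ (which the Lebesgue point property does give), combined with the uniform bound $(1-\mu_1(B_n(u)))^{n-1}\le e^{-c\|u\|^p}$ on $\|u\|\le Rn^{1/p}$ from your half-ball inclusion, plus a separate tail estimate; this is a generalized-DCT argument rather than the classical one you invoke. Your treatment of the $\mu_1$-measure of the off-center balls (via $B_n(u)\subset B(x,2\|u\|/n^{1/p})$ and the regular-convergence form of the LDT), the use of Lemma \ref{lem:lebesgue_pts}(2) for the far regime, the polar-coordinate evaluations, and the factor-of-two bookkeeping in the second moment are all sound and parallel the paper's estimates.
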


\begin{remark}
  The proof, in the appendix, borrows some tricks from the corresponding result in \cite{devroye2017measure}, although we must adapt their proof to accommodate $\mu \ne \nu$.
  There seems to be an issue with the validity of the proof of Theorem 2.1(i) in \cite{devroye2017measure}, particularly how the Lebesgue differentiation theorem applied to fixed points $v$ and $x$ can then translate to the similar result uniformly over $X_0$ (which is a draw from $\mu$).
  Our more complete study of the Lebesgue points in Lemma \ref{lem:lebesgue_pts} resolves this potential oversight, completing and generalizing the proof.
\end{remark}

\begin{proof}[Proof sketch of Lemma \ref{lem:voronoi}]
Recall that $S_1$ is the Voronoi cell around $X_1$.  As was done in \cite{devroye2017measure} (in the case that $\mu = \nu$), we observe that for $X_0 \sim \mu$,
\begin{align*}
&\bb E[M(S_1) | X_1 = x] = \bb P \{ X_0 \in S_1 | X_1 = x\} \\
& = \bb P \left\{ \cap_{i=2}^n \left\{ X_i \notin B(X_0,\| X_0 - x\|) \right\} \right\} = \bb E \left[(1 - Z(x))^{n-1} \right],
\end{align*}
where $Z(x) = N (B(X_0,\| X_0 - x\|))$.
In the appendix, we use the Lebesgue differentiation theorem (LDT) and Lemma \ref{lem:lebesgue_pts} to make precise the following string of approximations
\begin{align}
&N(B(X_0, \|X_0 - x \|)) \approx \nu(x) \lambda(B(X_0, \|X_0 - x \|)) \nonumber \\
&= \nu(x) \lambda(B(x, \|X_0 - x \|)) \approx \frac{\nu(x)}{\mu(x)} M(B(x, \|X_0 - x \|)),
\label{eq:lebesgue_approx}
\end{align}
and it is straightforward to see that $Z_0(x) = M(B(x, \|X_0 - x \|))$ has a uniform distribution,
which after some derivation gives us \eqref{eq:voronoi}.
In order to establish \eqref{eq:voronoi_sqr}, we follow a similar procedure.
\end{proof}

We can see the necessity of the assumption that these have densities with respect to the Lebesgue measure due to our use of $\lambda(B(X_0, \|X_0 - x \|)) = \lambda(B(x, \|X_0 - x \|))$ used in \eqref{eq:lebesgue_approx}.

\section{Proving $L_2$-consistency of NNM}

This section is primarily devoted to proving $L_2$-consistency in the known $\mu$, noiseless $Y$ case, Theorem \ref{thm:1NN_SLLN}.
In order to prove Theorems \ref{thm:noiseless_NNM}, \ref{thm:noisy_NNM} we control the additional randomness due to $\cl X_M$ and noisy $Y$.
Both proofs are in the appendix, so the results are not restated here.
For Theorem \ref{thm:noisy_NNM}, we require Lemma \ref{lem:voronoi}.
In short, the variance of the summand in $\hat G$, $M(S_j) Y_j$, is bounded by $V M^2(S_j)$ so we need to control the squared $\mu$-measure of the Voronoi cells.


We will divide the proof of Theorem \ref{thm:1NN_SLLN} into two thrusts: demonstrating asymptotic unbiasedness and diminishing variance.
We will discuss in Section \ref{sec:closer} how these results might be able to generalize to separable metric spaces.

{\bf Asymptotic unbiasedness} of $\bb Q_1(\eta)$ follows almost immediately from finite dimensional nearest neighbor theory \cite{biau2015lectures} and H\"older's inequality.  We give a proof sketch here to highlight how it might easily generalize to metric spaces, in the event that new NN regression theory is developed.

\begin{theorem}
\label{thm:bias}
Let $q_0, q_1$ be H\"older conjugates, suppose Assumption \ref{as:measure_1} and that $\bb E |\eta(X_1)|^{q_1} < \infty$.
Then
\[
\lim_{n \rightarrow \infty} \bb E [\bb Q_1(\eta)] = \int \eta(x) \mu(x) \dr x.
\]
\end{theorem}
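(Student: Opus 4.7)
The plan is to rewrite the expectation as a single integral against $\mu_1$ with density $g_n(x) := n\,\bb E[\mu_0(S_1) \mid X_1 = x]$, invoke the pointwise convergence $g_n \to f_0/f_1$ from Lemma \ref{lem:voronoi}, and then pass the limit inside the integral using the H\"older-type assumption on $\eta$ and the Renyi divergence bound on $(\mu_0,\mu_1)$.

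\textbf{Reduction.} By the Voronoi representation \eqref{eq:1NN_voronoi} and the iid symmetry of $X_1,\ldots,X_n$,
\[
\bb E\!\left[\int \eta(X_{(1)}(x))\,\dr\mu_0(x)\right] = \bb E\!\left[\sum_{j=1}^n \eta(X_j)\,\mu_0(S_j)\right] = n\,\bb E[\eta(X_1)\,\mu_0(S_1)] = \int \eta(x)\,g_n(x)\,\dr\mu_1(x).
\]
Since $\{S_j\}_{j=1}^n$ partitions $\bb R^p$ up to a Lebesgue-null set, $\int g_n\,\dr\mu_1 = n\,\bb E[\mu_0(S_1)] = 1 = \int (f_0/f_1)\,\dr\mu_1$, so the density $g_n$ and the prospective limit $f_0/f_1$ integrate to the same total mass against $\mu_1$.

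\textbf{Passing to the limit.} Lemma \ref{lem:voronoi} yields $g_n \to f_0/f_1$ pointwise $\mu_1$-a.e., and together with matching total masses, Scheff\'e's lemma gives $g_n \to f_0/f_1$ in $L^1(\mu_1)$. This already settles the case of bounded $\eta$. For unbounded $\eta$, split $\eta = \eta\,\bo 1_{\{|\eta|\le K\}} + \eta\,\bo 1_{\{|\eta|>K\}}$: the truncated piece converges by the bounded case, while H\"older's inequality with conjugates $q_0,q_1$ controls the tail,
\[
\left|\int \eta\,\bo 1_{\{|\eta|>K\}}\bigl(g_n - f_0/f_1\bigr)\,\dr\mu_1\right| \le \left(\int_{\{|\eta|>K\}} |\eta|^{q_1}\,\dr\mu_1\right)^{\!1/q_1}\!\Bigl(\|g_n\|_{L^{q_0}(\mu_1)} + \|f_0/f_1\|_{L^{q_0}(\mu_1)}\Bigr).
\]
The first factor vanishes as $K\to\infty$ by the $L^{q_1}$ hypothesis on $\eta$, and $\|f_0/f_1\|_{L^{q_0}(\mu_1)}$ is finite by Assumption \ref{as:measure_1}.

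\textbf{Main obstacle.} The crux is a \emph{uniform} bound $\sup_n \|g_n\|_{L^{q_0}(\mu_1)} < \infty$. The natural route combines the conditional Cauchy--Schwarz inequality $g_n(x)^2 \le n^2\,\bb E[\mu_0(S_1)^2\mid X_1 = x]$ with the second assertion of Lemma \ref{lem:voronoi}, which asymptotically dominates the right-hand side by $2(f_0/f_1)^2(x)$, giving the pointwise estimate $\limsup_n g_n(x) \le \sqrt{2}\,(f_0/f_1)(x)$. Promoting this pointwise $\limsup$ into a $\mu_1$-uniform $L^{q_0}$ bound is the delicate step; it can be done either by extracting an effective non-asymptotic form of the second-moment inequality in Lemma \ref{lem:voronoi} (producing a dominator of the form $C(f_0/f_1)(x)$ which lies in $L^{q_0}(\mu_1)$ by Assumption \ref{as:measure_1}), or by an extended dominated-convergence argument using the varying majorants $h_n(x) := n\bigl(\bb E[\mu_0(S_1)^2\mid X_1 = x]\bigr)^{1/2}$, whose $L^{q_0}(\mu_1)$ norms can be tracked against $\|f_0/f_1\|_{L^{q_0}(\mu_1)}$. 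Once this uniform control is established, sending $n\to\infty$ and then $K\to\infty$ completes the proof.
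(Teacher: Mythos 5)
Your reduction to $\int \eta\, g_n\,\dr\mu_1$ with $g_n(x) = n\,\bb E[\mu_0(S_1)\mid X_1=x]$, the mass computation $\int g_n\,\dr\mu_1 = 1$, and the Scheff\'e argument for the truncated piece are all correct, and you have correctly identified where the difficulty lies. But the proof is not complete: the step you flag as the ``main obstacle,'' namely $\sup_n \|g_n\|_{L^{q_0}(\mu_1)} < \infty$, is a genuine gap that your sketch does not close. The second assertion of Lemma \ref{lem:voronoi} is only an asymptotic pointwise $\limsup$, so via Cauchy--Schwarz you get at best $\limsup_n g_n(x) \le \sqrt{2}\,(f_0/f_1)(x)$ for $\mu_1$-a.e.\ $x$. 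Passing from a pointwise $\limsup$ to a uniform-in-$n$ $L^{q_0}$ bound requires either a reverse Fatou argument (which needs an integrable dominator --- circular, since that is exactly what you are trying to produce) or a non-asymptotic version of the second-moment inequality, which the paper does not supply and which is not routine to extract from its proof. Neither of the two routes you gesture at is carried out, so the tail estimate, and hence the theorem, is not established.

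The paper sidesteps this entirely by applying H\"older \emph{before} integrating out the randomness of the sample. Writing $\bb Q_1(\eta) = \int \eta(X_{(1)}(x))\,\tfrac{f_0(x)}{f_1(x)}\,\dr\mu_1(x)$ and splitting with conjugates $q_0,q_1$ puts the \emph{deterministic} factor $\bigl(\int (f_0/f_1)^{q_0}\dr\mu_1\bigr)^{1/q_0}$ (finite by Assumption \ref{as:measure_1}, with no $n$-dependence to control) in one slot and $\bigl(\int |\eta|^{q_1}(X_{(1)}(x))\,\dr\mu_1(x)\bigr)^{1/q_1}$ in the other; the expectation of the latter is bounded uniformly in $n$ by Stone's moment inequality (Lemma \ref{lem:stone0}), $\bb E|\eta(X_{(1)}(X))|^{q_1} \le \gamma_p\, \bb E|\eta(X)|^{q_1}$. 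Applied to $|\eta|\,\bo 1_{\{|\eta|>K\}}$ this gives a tail bound of order $\bigl(\gamma_p\,\bb E[|\eta|^{q_1}\bo 1_{\{|\eta|>K\}}]\bigr)^{1/q_1}$, uniform in $n$ and vanishing as $K \to \infty$, which is exactly the uniform integrability you were missing. In short: your decomposition forces the $L^{q_0}$ norm onto the random object $g_n$, where it is hard to control; the paper's decomposition places it on $f_0/f_1$, where Assumption \ref{as:measure_1} handles it for free, and uses Stone's lemma to absorb the nearest-neighbor evaluation of $\eta$. To repair your write-up, replace the tail estimate by this H\"older-plus-Stone bound; the rest of your argument can stand.
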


\begin{proof}
By H\"olders inequality, 
\begin{align*}
&\left|E [\bb Q_1(\eta)] - \int \eta(x) \mu(x) \dr x\right| = \left| \bb E \left[ \int (\eta(X_{(1)}(x)) - \eta(x)) \mu(x) \dr x \right] \right|\\
&\le \left( \int |\eta(X_{(1)}(x)) - \eta(x)|^{q_1} \nu(x) \dr x \right)^{1/q_1} \cdot \left( \int \left( \frac{\mu(x)}{\nu(x)} \right)^{q_0} \nu(x) \dr x \right)^{1/q_0}
\end{align*}
by Lemma \ref{lem:stone2} (see \cite{biau2015lectures}) from classical NN theory, we have that
\begin{equation}
\label{eq:1nn_conv}
\int |\eta(X_{(1)}(x)) - \eta(x)|^{q_1} \nu(x) \dr x \rightarrow 0.
\end{equation}
which completes the proof (in fact it shows $L_1$ convergence).
\end{proof}

One can gain a better intuition by proving this using Lemma \ref{lem:voronoi}.
Specifically, the expected 1NN measure is,
\[
\bb E [\bb Q_1(\eta)] = \bb E \left[\eta(X_1) \cdot n \bb E[ M(S_1) | X_1 ] \right].
\]
We have pointwise convergence by \eqref{eq:voronoi},
\[
\eta(X_1) \cdot n \bb E \left[ M(S_1) | X_1 \right] \rightarrow \frac{\mu(X_1)}{\nu(X_1)} \eta(X_1),
\]
almost everywhere, and the RHS has expectation $\int \eta \mu$.
What remains is to show dominated convergence  (see the alternative proof of Theorem \ref{thm:bias} in Appendix).
We also demonstrate in the Appendix using instructive examples that for finite $n$ the bias is unavoidable.
These are typically cases where the LDT has non-uniform convergence (see \eqref{eq:lebesgue_approx}).

{\bf Diminishing variance.} We have established that the 1NN measure is asymptotically unbiased, but $L_2$-consistency remains to be shown.
Our main tool for showing this consistency is the following variance bound, which holds without any additional assumptions then those stated within.
Lemma \ref{lem:var_bd} demonstrates that as long as $\mu$ and $\nu$ are not too dissimilar, the variance of the 1NN measure is bounded by the discrepancy between the first and second nearest neighbor interpolants.

\begin{lemma}
\label{lem:var_bd}
Let $q_0,q_1$ be H\"older conjugates then,
\[
\mathbb V \left( \bb Q_1 (\eta) \right) \le 2 \left(\int \left(\frac{\mu(x)}{\nu(x)}\right)^{q_0} \nu(x) \dr x \right)^{\frac{1}{q_0}} \left(\int \left| \eta(X_{(1)}(x)) - \eta(X_{(2)}(x)) \right|^{2 q_1} \nu(x) \dr x \right)^{\frac{1}{q_1}}.
\]
\end{lemma}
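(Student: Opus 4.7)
The plan is to bound the variance via the Efron--Stein inequality applied to a leave-one-out perturbation of $\bb Q_1$. Let $\bb Q_1^{(-i)}(\eta)$ denote the 1NN measure constructed from the reduced sample $\{X_j : j \neq i\}$. Because $\bb Q_1^{(-i)}(\eta)$ is measurable with respect to $(X_j)_{j \neq i}$, the conditional-variance form of Efron--Stein gives
\[
\bb V\bigl(\bb Q_1(\eta)\bigr) \;\le\; \sum_{i=1}^n \bb E\bigl[(\bb Q_1(\eta) - \bb Q_1^{(-i)}(\eta))^2\bigr].
\]

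The first substantive step is to identify the perturbation geometrically. Deleting $X_i$ changes the 1NN interpolant only on the Voronoi cell $S_i$: outside $S_i$ the nearest neighbor is unchanged, while for $x \in S_i$ the new nearest neighbor is exactly the original second-nearest neighbor $X_{(2)}(x)$. Writing $\bb Q_1 = \sum_j \mu_0(S_j)\eta(X_j)$ and tracking how the $\mu_0$-mass on $S_i$ is redistributed after deletion yields
\[
\bb Q_1(\eta) - \bb Q_1^{(-i)}(\eta) \;=\; \int_{S_i} \bigl[\eta(X_i) - \eta(X_{(2)}(x))\bigr]\, \dr\mu_0(x).
\]

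Next, I would apply the integral form of Cauchy--Schwarz to each squared difference,
\[
\Bigl(\int_{S_i} g(x)\,\dr\mu_0(x)\Bigr)^2 \;\le\; \mu_0(S_i) \int_{S_i} g(x)^2\,\dr\mu_0(x),
\]
with $g(x) = \eta(X_i) - \eta(X_{(2)}(x)) = \eta(X_{(1)}(x)) - \eta(X_{(2)}(x))$ on $S_i$. Summing over $i$ and using that the $\{S_i\}$ partition $\bb R^p$ up to a $\mu_0$-null set, the pieces concatenate into a single integral over $\bb R^p$ with integrand $\mu_0(S_{N(x)}) \cdot [\eta(X_{(1)}(x)) - \eta(X_{(2)}(x))]^2$, where $N(x)$ is the index of the nearest neighbor of $x$. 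Since $\mu_0(S_i) \le 1$, that extra weight can be dropped, giving
\[
\bb V\bigl(\bb Q_1(\eta)\bigr) \;\le\; \bb E\Bigl[\int [\eta(X_{(1)}(x)) - \eta(X_{(2)}(x))]^2 \, \dr\mu_0(x)\Bigr].
\]

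The final step rewrites $\dr\mu_0 = (f_0/f_1)\,\dr\mu_1$ and applies H\"older with conjugate exponents $q_0, q_1$, exactly as in \eqref{eq:holder_Q1}, separating the Renyi factor from the squared-difference moment; a concluding Jensen bound ($q_1 \ge 1$ makes $z \mapsto z^{1/q_1}$ concave) pulls the expectation inside the $1/q_1$ power to match the form in the lemma. I expect the main obstacle to be the Voronoi bookkeeping in step two: confirming precisely that deletion redistributes $\mu_0$-mass on $S_i$ to the second-nearest neighbors and that the partition argument covers $\bb R^p$ without double-counting. The Hölder finish re-uses the machinery already assembled for Theorem~\ref{thm:bias} and introduces no new ideas. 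The factor of $2$ in the statement is, I suspect, a slight over-estimate arising from using the with-replacement form of Efron--Stein together with $(a+b)^2 \le 2(a^2 + b^2)$ to compare deletion-type and insertion-type perturbations; the route above yields the bound with constant $1$.
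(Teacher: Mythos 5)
Your proposal is correct and follows essentially the same route as the paper: Efron--Stein, identification of the single-point perturbation as the first-versus-second nearest neighbor difference on the deleted point's Voronoi cell, Cauchy--Schwarz, and a final H\"older step splitting off the Renyi factor. The only substantive difference is that you invoke the jackknife form of Efron--Stein (with an arbitrary $X_{(-i)}$-measurable surrogate) directly, whereas the paper uses the replace-one form and then passes to the leave-one-out measure via a triangle inequality, which is exactly where its factor of $2$ (via $\tfrac{n}{2}\cdot 4\cdot\tfrac{1}{n}$) comes from; your variant legitimately yields the bound with constant $1$, and your closing Jensen step to pull the expectation inside the $1/q_1$ power is a detail the paper leaves implicit.
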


The fact that $\mu, \nu$ are densities or even over $\bb R^p$ is actually not required.
If one were to replace $\mu/\nu$ with the Radon-Nikodym derivative then the result would still hold.
We conclude this subsection by showing that the 1NN measure has diminishing variance.

\begin{theorem}[1NN measure variance]
\label{thm:var}
Under Assumptions \ref{as:measure_1} and \ref{as:measure_2}, we have that
\[
\mathbb V \left( \bb Q_1(\eta) \right) \rightarrow 0, \quad \textrm{as } n \rightarrow \infty.
\]
\end{theorem}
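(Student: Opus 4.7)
The plan is to combine the variance bound of Lemma \ref{lem:var_bd} with a density / Stone-lemma argument that lets us replace the rough $\eta$ by a continuous approximant. Reading Lemma \ref{lem:var_bd} with an expectation over the random neighbors (as its proof actually delivers) and applying Jensen's inequality to the concave map $t \mapsto t^{1/q_1}$, the variance is bounded by a finite constant (coming from the Rényi prefactor, finite by Assumption \ref{as:measure_1}) times $I_n^{1/q_1}$, where
\[
I_n := \bb E \int \bigl[ \eta(X_{(1)}(x)) - \eta(X_{(2)}(x)) \bigr]^{2 q_1} \dr \mu_1(x).
\]
The theorem therefore reduces to $I_n \to 0$, a purely NN-interpolation statement in which the Rényi factor has been fully absorbed.

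To attack $I_n$ I would run the classical ``approximate by continuous, control the remainder by Stone'' split. Fix $\epsilon > 0$ and pick $\tilde\eta \in C_c(\bb R^p)$ with $\|\eta - \tilde\eta\|_{L^{2 q_1}(\mu_1)} < \epsilon$; such a $\tilde\eta$ exists because $\eta \in L^{2 q_1}(\mu_1)$ by Assumption \ref{as:measure_2}. Treating $(\omega,x) \mapsto \eta(X_{(1)}(x)) - \eta(X_{(2)}(x))$ as an element of $L^{2 q_1}(\bb P \otimes \mu_1)$, the triangle inequality dominates $I_n^{1/(2 q_1)}$ by $T_n^{(1)} + M_n + T_n^{(2)}$, where $T_n^{(k)}$ is the $L^{2 q_1}$-norm of $\eta(X_{(k)}) - \tilde\eta(X_{(k)})$ and $M_n$ is that of $\tilde\eta(X_{(1)}) - \tilde\eta(X_{(2)})$.

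The outer pieces $T_n^{(k)}$ I would control by a Stone-type maximal inequality: in Euclidean space a given $X_j$ can be the $k$-th nearest neighbor of at most $k\gamma_p$ other points, so Lemma \ref{lem:stone0} (whose $k=1$ case is already used in Theorem \ref{thm:bias}) and its immediate 2NN variant give $T_n^{(k)} \le (k \gamma_p)^{1/(2 q_1)} \epsilon$, uniformly in $n$. For $M_n$ I would use Lemma \ref{lem:lebesgue_pts}(2): at every $x$ in the $\mu_1$-full-measure Lebesgue set $\cl X$, the iid data eventually fill every neighborhood of $x$, so $X_{(1)}(x), X_{(2)}(x) \to x$ almost surely; boundedness of $\tilde\eta$ provides a $\bb P \otimes \mu_1$-integrable dominator, and uniform continuity of $\tilde\eta$ on its compact support combined with dominated convergence yield $M_n \to 0$.

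Sending $n \to \infty$ and then $\epsilon \downarrow 0$ closes the argument. The main obstacle is the two adaptations to the second nearest neighbor---the Stone count for $X_{(2)}$ and its a.s.\ convergence at Lebesgue points---both of which are folklore for iid Euclidean nearest neighbors but are not packaged as explicit lemmas in the excerpt; everything else (Jensen, triangle, dominated convergence) is bookkeeping. The $L^{2 q_1}$ moment in Assumption \ref{as:measure_2} is calibrated precisely so that the Stone estimate closes after the Hölder split from Lemma \ref{lem:var_bd}.
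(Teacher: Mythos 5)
Your proof is correct and follows essentially the same route as the paper: Lemma \ref{lem:var_bd} plus finiteness of the R\'enyi factor reduces everything to showing $\bb E\int\left[\eta(X_{(1)}(x))-\eta(X_{(2)}(x))\right]^{2q_1}\dr\mu_1(x)\to 0$, which the paper delegates to Lemma \ref{lem:two_NN_diff} (itself an application of the Stone-type Lemma \ref{lem:stone2} from \cite{biau2015lectures}), and your continuous-approximation-plus-Stone argument is precisely the standard proof of that cited lemma, unpacked. Your observation that the right-hand side of Lemma \ref{lem:var_bd} should carry an expectation over the data, repaired via Jensen applied to the concave map $t\mapsto t^{1/q_1}$, is a correct tightening of a detail the paper glosses over.
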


\begin{proof}
In Lemma \ref{lem:two_NN_diff} (in Appendix) we establish that under Assumption \ref{as:measure_2} we have that for $X \sim \nu$,
\[
\bb E | \eta(X_{(1)}(X)) -  \eta(X_{(2)}(X))|^{2q_1} \rightarrow 0.
\]
This result uses lemmata from the study of nearest neighbors regressors in \cite{biau2015lectures}.
Under Assumption \ref{as:measure_1}, we have that $\bb E (\mu(X)/\nu(X))^{q_0}$ is bounded.
Applying Lemma \ref{lem:var_bd} we reach our conclusion.
\end{proof}




\section{A closer look at the results and their assumptions}
\label{sec:closer}

This section demonstrates some implications and potential generalizations of the above results.
First, we discuss Assumptions \ref{as:measure_1}, \ref{as:measure_2} and show that the 1NN measure is $L_2$-consistent in situation where IPW is not.
Second, we discuss potential generalizations to separable metric spaces and the major places in which the finite dimensional assumption is required in this work.

\subsection{Comparison to Inverse Probability Weighting (IPW)}
\label{sec:IPW}

{\bf Comparing consistency conditions.} 
For this comparison, it is sufficient to consider the known $\mu$, noiseless $Y = \eta(X)$ case.
We will see that there are situations in which NNM achieves consistency where IPW is not guaranteed consistency.
The IPW estimate can be expressed as $\bb P_n (\tilde \eta)$ where $\tilde \eta(x) = \eta(x) \cdot \mu(x) / \nu(x)$.
The $L_2$ weak law of large numbers states that if $\bb V(\tilde \eta(X)) < \infty, X \sim \nu$, i.e.~has finite second moment, then we have that $\bb P_n (\tilde \eta) \rightarrow \int \eta \mu$.
Hence, we can compare this condition, 
\begin{equation}
\label{eq:IPW_cond}
\int \left( \eta(x) \cdot \frac{\mu(x)}{\nu(x)} \right)^2 \nu(x) \dr x < \infty,
\tag{IPW Condition}
\end{equation}
to the Assumptions \ref{as:measure_1}, \ref{as:measure_2}.
To provide a natural comparison, we will use H\"olders inequality, to obtain, 
\[
D_{2 q_0}(\mu || \nu) < \infty, \quad \int |\eta(x)|^{2 q_1} \nu(x) \dr x < \infty,
\]
as a stronger IPW condition, that is tight for some examples.
Notice that this is a stronger condition than the Assumptions \ref{as:measure_1}, \ref{as:measure_2}, leaving us with the result that the 1NN measure is $L_2$-consistent in situations where $L_2$-consistency of IPW is not guaranteed.

{\bf Example where NNM is better than IPW.} We construct one such example from the Student's t-distribution.
Let $\nu$ be $t_k$-distributed with the degrees of freedom $k \in (3,4)$, $\mu$ be $t_{k-1}$ distributed, and $\eta(x) = |x|$.
The density ratio $\mu(x) / \nu(x) = C_k (1 + x^2 / k)^{-k/2} / (1 + x^2 / (k-1))^{-(k+1)/2}$.
Then \eqref{eq:IPW_cond} does not hold:
\[
\int \left( \eta(x) \cdot \frac{\mu(x)}{\nu(x)} \right)^2 \nu(x) \dr x \ge C_k \int x^2 \cdot (1 + x^2) \nu(x) \dr x= \infty,
\]
since $\nu$ does not have finite fourth moment (for a constant $C_k$).
However, we can select $q_0 = 3$ and $q_1 = 3/2$ to see that Assumptions \ref{as:measure_1}, \ref{as:measure_2} hold since,
\[
\int \left(\frac{\mu(x)}{\nu(x)} \right)^{q_0} \nu(x) \dr x \le C_k' \int (1 + x^2)^{3/2} \nu(x) \dr x < \infty,
\]
and $\int \eta^{2 q_1} \nu = \int |x|^3 \nu(x) \dr x < \infty$, both because $\nu$ has finite third moment.
We can see that in simulation this bears out and NNM has lower mean squared error than IPW (Table \ref{tab:student}).

Of course, in this example, one would use the trimmed variant of the IPW \cite{ma2020robust}, where we replace the IPW with $W_i = (\mu(X_i) / \nu(X_i)) \cdot \bo 1 \{ \mu(X_i) / \nu(X_i) < b_n \}$.
This trimming introduces bias, but as long as $\int \eta^2 \nu < \infty$ we can obtain $L_2$ consistency by letting $b_n \rightarrow \infty$ (perhaps extremely slowly).
It is worthwhile to remember that NNM does not require knowledge or an estimate of $\mu/\nu$, while IPW and its trimmed variant does.
One can interpret these observations as the following: NNM implicitly trims the importance weight, trading off more bias for less variance.

\begin{table}
\centering
\begin{tabular}{lrrrrrr}
\toprule
n        & 16 & 64 & 256 & 1024 & 4096 & 16384 \\
\midrule
NNW Mean &  0.990 &  1.155 &   1.236 &    1.284 &    1.312 &     1.330 \\
NNW MSE  &  0.149 &  0.045 &   0.016 &    0.006 &    0.002 &     0.001 \\
IPW Var  &  1.178 &  0.485 &   0.097 &    0.048 &    0.032 &     0.032 \\
\bottomrule
\end{tabular}
\caption{A comparison of NNM and IPW for the t-distribution example, where the true value is $\bb E_\mu |X| \approx 1.356$.  IPW is unbiased, so the variance equals the mean square error (MSE).}
\label{tab:student}
\end{table}

\subsection{Generalizing to separable metric spaces}

The restrictiveness of requiring $X$ to be continuous and finite dimensional is striking when we compare these results to what we know about KNN classification \cite{hanneke2021universal} and Proto-NN \cite{gyorfi2021universal}.
In this section we will highlight all of the places in which the finite dimensionality (FD) assumption is used in this paper and discuss approaches to generalizing to separable metric spaces.

{\bf Noiseless $Y$.}
For the proof of Theorem \ref{thm:1NN_SLLN}, the only real place that the FD assumption was used is \eqref{eq:1nn_conv}.
In fact, we can use a recent result from \cite{gyorfi2021universal} to establish consistency of the 1NN measure for separable metric spaces but under significantly more restrictive Assumptions than \ref{as:measure_1}, \ref{as:measure_2}.
In that work, they show (Theorem 3) that ProtoNN is pointwise $L1$-consistent for classification, and in the proof they show that when $\eta$ is bounded AS
\[
\int | \eta(X_{(1)}(x)) - \eta(x) | \nu(x) \dr x \rightarrow 0.
\]
This is exactly \eqref{eq:1nn_conv} with $q_1 = 1$ but with an additional boundedness assumption. 
If then the density ratio $\mu(x) / \nu(x)$ is also bounded AS, this implies that $\bb Q_1(\eta)$ is $L_1$-consistent (but not necessarily $L_2$-consistent).
Of course, a bounded density ratio and bounded $\eta$ dramatically weaken the result, making it not applicable to estimating expectations, variances, and many other moments, as well as not applicable to distributions such as normals, gammas, betas, etc.

It is worth attempting to weaken these assumptions and establish $L_2$ consistency using directly the proof techniques in \cite{hanneke2021universal}, but there are specific barriers.
First, one of the main tools used is the density of Lipschitz functions in $L_1(\nu)$ (where now $\nu$ is a Borel measure).
However, we would require that Lipschitz functions are dense in $L_p(\nu)$, which has not been established to the best of our knowledge (although we have no counter-example).
Furthermore, the boundedness of $\eta$ is used to establish dominated convergence, and it is unclear how to get around this.
To the best of our knowledge, establishing \eqref{eq:1nn_conv} under only moment assumptions in separable metric spaces is an open problem.
Such a result would also be able to be used to tackle Theorem \ref{thm:noiseless_NNM}---unknown $\mu$, noiseless $Y$.

Finally, the proof of Theorem 4.3 in \cite{forzani2012consistent} indicates that \eqref{eq:1nn_conv} may be established for $q_1 = 2$ for bounded functions in metric spaces that satisfy the Besicovitch condition.
Of course, the boundedness condition violates our assumptions, but the proof of the extension of Stone's theorem (Theorem 3.4) contains an infinite dimensional analogue of Lemma \ref{lem:stone2}.
However, that result relies on a somewhat opaque condition (iii') and it is unclear if it can be generalized to $L_4$-convergence, which is needed for Theorem \ref{thm:noiseless_NNM}.
In summary, there are promising approaches to generalizing the noiseless case to metric spaces, however, it is safely outside of the scope of this work.

{\bf Noisy $Y$.}
The proof of Theorem \ref{thm:noisy_NNM} required the use of Lemma \ref{lem:voronoi} \eqref{eq:voronoi_sqr}.
This was required to establish the convergence, $\bb E[\sum_j M^2(S_j)] \rightarrow 0$, and it is unclear how to do this without our characterizations of the $\mu$ measure of Voronoi cells.
This condition is unavoidable, because the conditional variance of $\bb V(\hat G | \cl X_N) = V \sum_j M^2(S_j)$ for known $\mu$ and constant $\bb V (\epsilon_i) = V$. 
As mentioned these results were heavily reliant on the FD assumptions and continuous $X$, since we appealed to the translation invariance of the Lebesgue measure.
Furthermore, the only precedent that we have of characterizing Voronoi cells is \cite{devroye2017measure} which is also in the FD setting.
As mentioned, it may be that a weaker result than Lemma \ref{lem:voronoi} would be sufficient.

\section{Applications to missing data problems}

\subsection{Imputation in massive databases}

We will consider statistics that are aggregates of non-linear elementwise operations (i.e. empirical moments).
Most common aggregations on database tables, such as sum, mean, variance, covariance, and count along with grouping operations and filters can be expressed in this way.
Specifically, let $Z \in \bb R^d$ be a partially missing random variable and $g : \bb R^p \times \bb R^d \to \bb R$ be a possibly non-linear integrable function then we will focus on estimating the following functional,
\[
G := \bb E \left[ g(X,Z) | {\rm missing} \right],
\]
which is the expectation of $g(X,Z)$ for the missing population.
For example, suppose we would like to express the following query, {\tt select mean(log(Z)) where X < 1 and Z = missing},
we could use the function $g(x,z) = 1\{x < 1\} \log z$ (in this example, $p=d=1$).
Of course, we are not able to make such a query because it is based on unobserved data.
NNM is equivalent to redefining $Y \gets g(X,Z)$ and performing single imputation on the new $Y$ with the nearest neighbor in $X$ space.
However, this can be done implicitly by precomputing the NNM weights based on $X$, and then computing $\hat G$ for any arbitrary $g$ (without the need to recompute new weights for new $g$).
The NNM weights need to be updated only when the index is modified via insert, delete, etc.
These aggregate computations can be implemented with search indexing with approximate nearest neighbor, a standard technology for indexing in distributed databases. 

\subsection{Imputation of the trans-Atlantic slave trade}

The trans-Atlantic slave trade (TAST), also known as the middle passage, refers to the slave ship voyages that brought African slaves to the Americas.
The middle passage is reported to have forcibly migrated over 10 million Africans to the Americas over a roughly 3 century time span.
The number of slaves that embarked from Africa is especially important since the number of slaves taken from Africa can impact other estimates that result from this.
For example, when estimating the population of Africa in a given decade, demographers will use population growth models and more recent census data \cite{manning2020research}.
However, the population growth was stifled by the slave trade, and without accounting for it past populations will tend to be underestimated because the growth rate is overestimated.

The database that we use is the 2010 extended version of the Voyages database, \cite{eltis2020digital}.
There is a significant amount of missingness throughout the database---$76.5\%$ of the voyages have missing number of slaves at embarkation---which is the partially missing variable of interest.
We apply NNM to compute the total number of slaves taken from Africa using the number of slave at arrival and the year for the voyage as covariates.
In Figure \ref{fig:slaves}, we can see the non-missing data and the 1NN imputed data (missing $Y$s filled in with its matched value).
The NNM estimate of the total number of slaves taken from Africa is $10$,$644$,$376$, while the MCAR assumption over-estimates this---$11$,$569$,$160$.

\begin{figure}
\centering
\includegraphics[width=.8\textwidth]{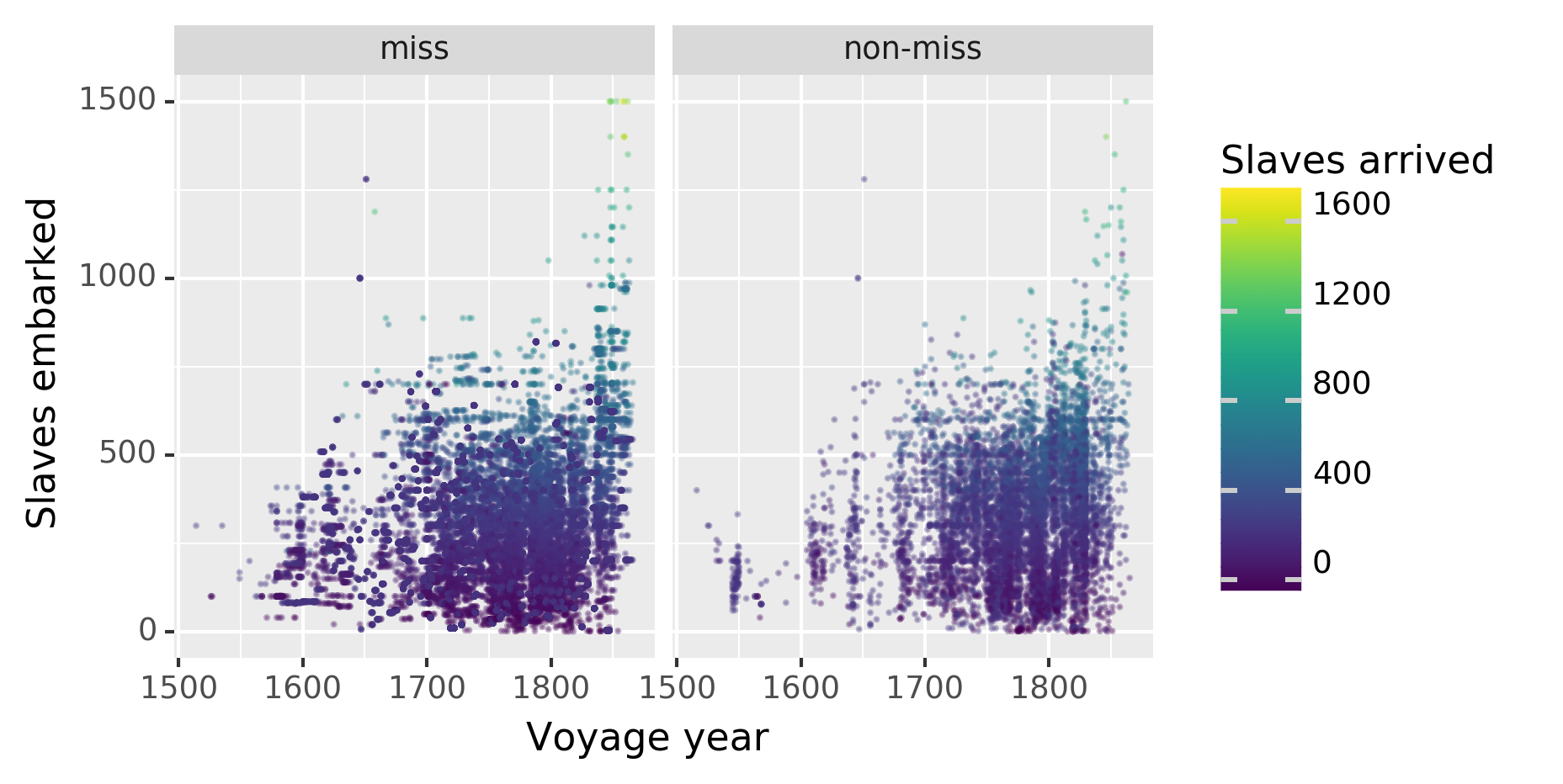}
\caption{Scatterplot of slave trade voyage's number of slaves embarked (y-axis), voyage year (x-axis), number of slaves disembarked (color), and the 1NN weight (size).  The missing and 1NN imputed number of slaves arrived in Americas are on the left and non-missing values are on the right.}
\label{fig:slaves}
\end{figure}

\subsection{Assessing test loss under covariate shift}

When the training and test datasets in supervised learning have different covariate distributions, then we have covariate shift \cite{shimodaira2000improving}.
Let $Y \in \bb R^d$, $\Omega_0$ be the training data, $\Omega_1$ the validation data, and $\Omega_2$ the test data.
By training a predictor $\hat h : \bb R^p \to \bb R^d$ on $\Omega_0$, we can consider this fixed and obtain the validation losses $L_i = \ell(\hat h(X_i), Y_i)$ for each $i \in \Omega_1$.
The test error can be estimated using NNM where $L$ is missing on the test data $\Omega_2$ and non-missing on $\Omega_1$.
Going beyond this, \cite{loog2012nearest} has used NNM to perform domain adaptation where $\hat h$ is directly trained using a test error estimate with NNM.
However, to demonstrate the validity of this approach we require uniform laws of large numbers, a future direction of research.
Similarly, finite sample rates of convergence would be required to establish generalization error bounds.
Overall, such results are a natural followup to this work.

\bibliographystyle{abbrv}
\bibliography{nnimpute}

\newpage

\appendix

\section{Explanation and examples}

We will examine a few examples which put this theory to the test, and see numerically the convergence guaranteed in Theorem \ref{thm:1NN_SLLN}.
Our variance bound in Lemma \ref{lem:var_bd} is determined by the $L^{q_1}(\nu)$ norm of $\eta$ and $D_{2 q_0}(\mu || \nu)$.
It is instructive to go over the outline the proof of Lemma \ref{lem:voronoi}, because the proof indicates which models will yield more slowly diminishing bias than others.

\paragraph{Example 1.}

Let $\nu$ be Beta$(1.25,1)$ and $\mu$ be Beta$(.75,1)$.
Then $\mu(x) \propto x^{-0.25}$ and $\nu(x) \propto x^{0.25}$, hence the density ratio $\mu(x)/\nu(x) \propto x^{-0.5}$ is diverging as $x \rightarrow 0$.
This is an example where $\mu,\nu$ have the same compact support.
An unbounded density ratio causes challenges for the 1NN measure because it means that near $0$ there is a significant amount of mass in $\mu$ but few data from $\nu$ to evaluate $\eta$.
We assessed the measure $M(S_j)$ by Monte Carlo sampling with 1M samples from $\mu$.

Figure \ref{fig:gaussian_beta} (right) depicts the density ratio and the $M$-measure of the Voronoi cells.
Because the Voronoi cells are random, we have that the measure is only on the average approaching the density ratio, and there is significant spread around the density ratio for a given location $X_i = x$.
Let $\eta(x) = x^{-0.25}$, and we can see that $D_2(\mu || \nu) < \infty$ and $\int \eta(X_1)^4 < \infty$, satisfying the assumptions.
Despite having unbounded density ratio, $\bb Q_1 (\eta)$ converges to its limiting expectation ($1.5$) as we can see in Table \ref{tab:ex}.
We can see that the spread of $M(S_1) | X_1 = x$ is greater for the larger density ratios, and furthermore, for finite samples this is biased downward for $x$ near 0.

\begin{table}
\centering
 \begin{tabular}{|l|r|r|r|r|r|}
 \hline
  Example & $n = 1e2$ & $1e3$ & $1e4$ & $1e5$ & $\sim \mu$ \\
 \hline
 1. Beta & $1.472$ & $1.483$ & $1.492$ & $1.493$ & $1.5$ \\
 2. Gaussian & $1.774$ & $1.991$ & $2.063$ & $2.083$ & $2.1$ \\
 3. Fat Cantor & $1.587$ & $1.842$ & $1.970$ & $1.997$ & $2$ \\
 \hline
 \end{tabular}
 \caption{Monte Carlo samples of $\bb Q_1(\eta)$ with $n$ samples from $\nu$ for the three example.  For comparison purposes, a sample from $\mu$ is provided, the expectation of which is the limit of $\bb Q_1(\eta)$.}
 \label{tab:ex}
\end{table}

\paragraph{Example 2.}

Let $\mu$ be Gaussian$(0,\sigma^2=2.1)$, $\nu$ be Gaussian$(0,1)$, and $\eta(x) = x^2$.
The estimation of $\int x^2 \dr M$ is natural as the second moment of the unobserved population.
This is an example where both densities are fully supported over $\bb R$.
The density ratio, $\mu(x)/\nu(x) \propto \exp(0.262 x^2)$, is not only unbounded but growing exponentially.
We can see from Figure \ref{fig:gaussian_beta} that near the origin the spread of $M(S_j)$ is low, but far from the origin there is a larger spread and downward bias (in the finite sample).
Due to this bias, the convergence of this example to its expectation is somewhat slower with a $0.8\%$ relative error at 10K samples (Table \ref{tab:ex}).

\paragraph{Example 3.}

In order to see the effect of non-uniform convergence of the LDT we will use a pedagogical construction, the fat Cantor set (the Smith-Volterra-Cantor set).
This set is constructed by the following algorithm: start with $\cl C = [0,1]$; for each $l=1,2,\ldots$ remove the middle $1/4^l$ of the remaining intervals, thereby splitting each interval into two parts.
In simulation, we only perform 5 iterations due to our fine grid.
The remaining set $\cl C$ has $\lambda$ measure of $1/2$ but does not contain any open intervals (it is entirely boundary and has no interior).
Let $\nu$ be uniform$(0,1)$ and $\mu$ be uniform$(\cl C)$.
We can make $\eta(x) = 2 \cdot 1\{x \in \cl C \}$ and so $\int \eta \dr M = 2$.
This example has bounded $\mu,\nu$ over a compact domain, and bounded $\eta$.

The fractal nature of this example causes non-uniform convergence of the LDT because we know that the $M$ measure of a small enough interval around $x$ approaches either $0$ (if $x \notin \cl C$) or $1$ (if $x \in \cl C$).
However, the Fat Cantor set looks from afar as if it does have low and high density regions, and this is manifested in the fact that for $x$ within small intervals that were removed, $M(S_1) | X_1 = x$ is non-zero.
In the subfigure to the right of Figure \ref{fig:fatcantor}, we can see the density ratio is $0$ in small intervals but, because these are surrounded by elements within $\cl C$, the Voronoi cells have large measure, $M(S_i)$.
Due to the fractal nature of the fat Cantor set, for any sample size $n$, this effect will always be manifested at some location at a small enough scale.

Regardless of this non-uniform convergence of the LDT, we observe that $\bb Q_1(\eta)$ converges to its limit, because these regions where the LDT has not yet converged are increasingly small.
We see in Table \ref{tab:ex} that with 10K samples, we achieve a relative error of $0.15\%$.

\begin{figure}
\includegraphics[width=\textwidth]{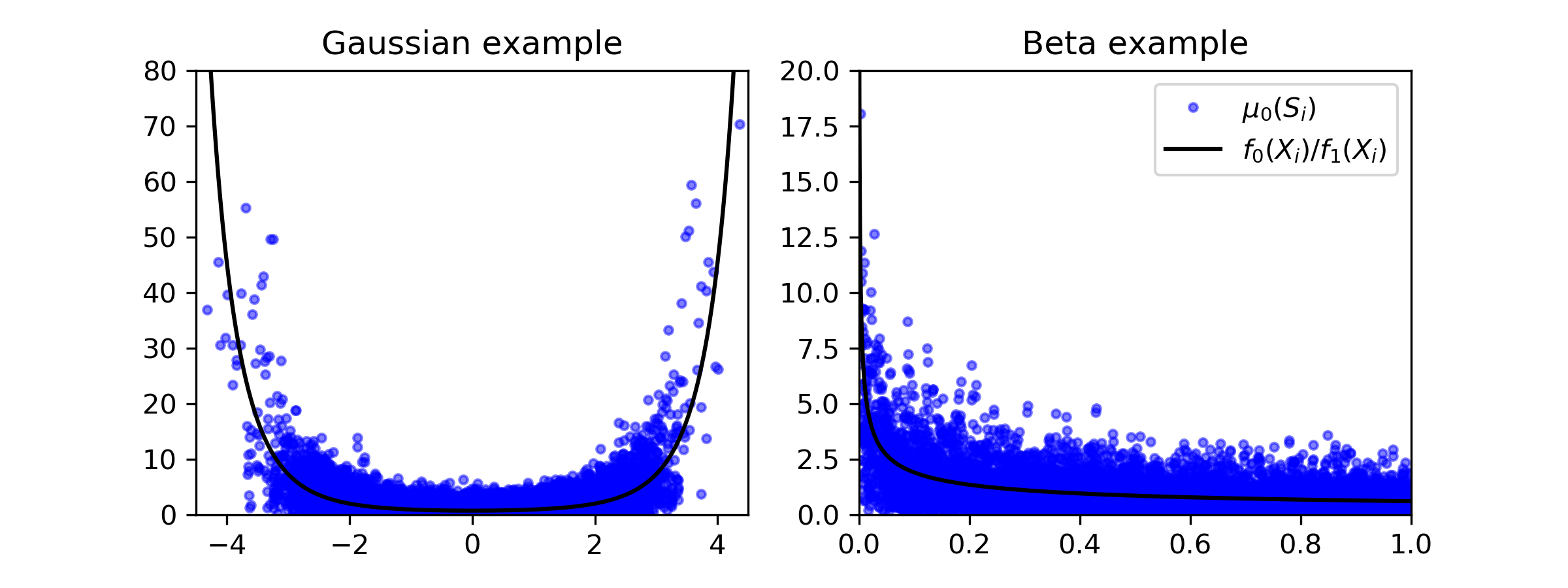}
\caption{1NN interpolation from distributions: left, the Gaussian$(0,1)$ ($\nu$) to Gaussian$(1,2.1)$ ($\mu$); right, the Beta$(1.3,1)$ ($\nu$) to the Beta$(.7,1)$ ($\mu$).
The $\mu$-measure of the Voronoi cell, $S_i$, around sample $X_i\sim \nu$ is plotted and the density ratio $\mu(X_i) / \nu(X_i)$.}
\label{fig:gaussian_beta}
\end{figure}

\begin{figure}
\includegraphics[width=\textwidth]{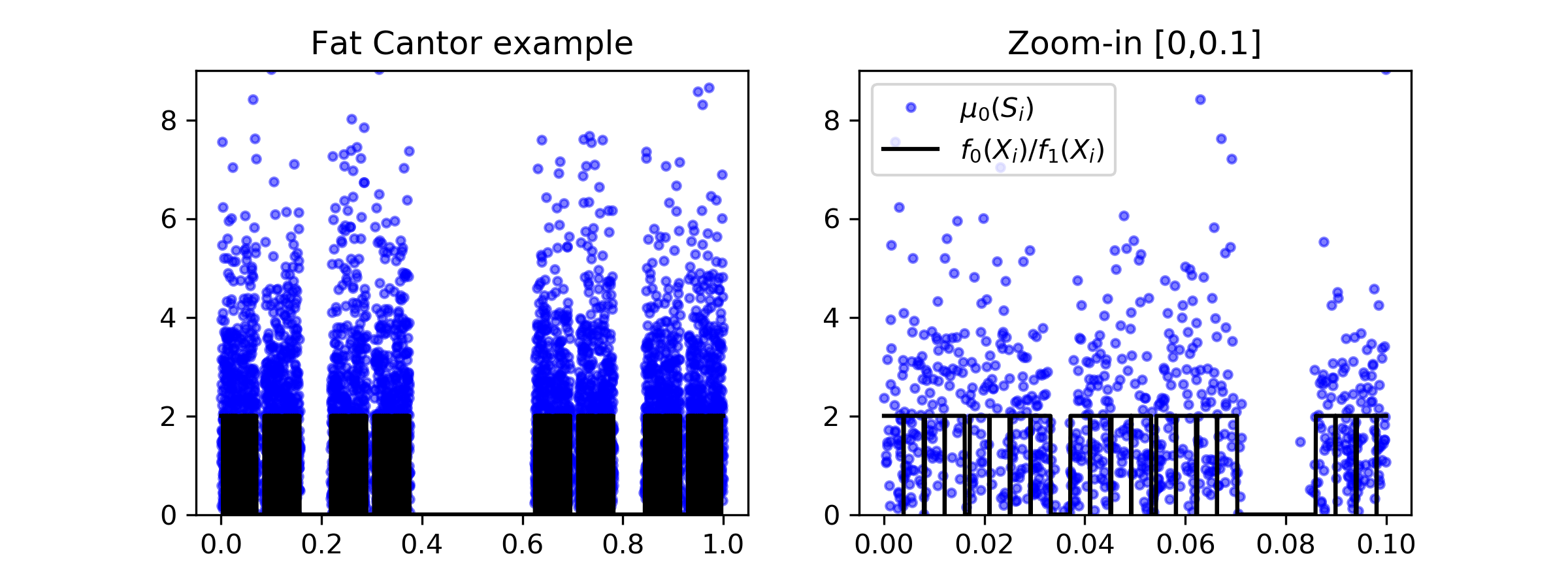}
\caption{1NN interpolation from the uniform$(0,1)$ ($\nu$) to the uniform distribution over the fat Cantor set $\cl C$ ($\mu$).
The $\mu$-measure of the Voronoi cell, $S_i$, around sample $X_i\sim \nu$ is plotted and the density ratio $\mu(X_i) / \nu(X_i) = \bo 1_{\cl C}(X_i)$ (the indicator function over $\cl C$).
The full range (left) and a sub-interval (right) are plotted.}
\label{fig:fatcantor}
\end{figure}

\section{Lemmata}


\begin{lemma}[\cite{biau2015lectures} Lemma 9.1]
\label{lem:stone0}
Suppose that $X,X_1, \ldots, X_n$ are drawn iid from a measure with a density in $\bb R^p$.
Let $g : \bb R^p \rightarrow \bb R$ be a Borel measurable function such that $\bb E|g(X)|^q < \infty$.  Then
\[
\bb E |g (X_{(1)}(X))|^q \le \gamma_p \bb E |g(X)|^q,
\]
where $\gamma_p$ is a universal constant depending on dimension $p$.
\end{lemma}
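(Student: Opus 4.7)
}

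The plan is to combine the classical geometric Stone lemma with an exchangeability argument. The geometric Stone lemma states: there is a constant $\gamma_p$ depending only on the dimension $p$ such that for any finite collection of distinct points in $\bb R^p$, any single point of $\bb R^p$ can be the (Euclidean) nearest neighbor of at most $\gamma_p$ members of the collection. This is proved by a cone-packing argument in $\bb R^p$: if $y$ is the nearest neighbor of distinct points $z_1,\ldots,z_m$ (in some set that contains all of them and $y$), then the directions $(z_i - y)/\|z_i - y\|$ must be mutually separated by at least $\pi/3$, so $m$ is bounded by the covering number of the sphere $S^{p-1}$ by caps of that angle, a constant $\gamma_p$.

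The second ingredient is symmetrization. First I would augment the sample by setting $X_0 := X$, so that $X_0,X_1,\ldots,X_n$ are i.i.d.\ from the same density. Because the distribution is absolutely continuous, ties have probability zero, so the nearest neighbor map is a.s.\ well defined. For each $j \in \{0,1,\ldots,n\}$, let $T(j) \in \{0,\ldots,n\}\setminus\{j\}$ be the index of the nearest neighbor of $X_j$ among the other $n$ points. In particular $X_{T(0)} = X_{(1)}(X)$.

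The main identity is a change-of-summation:
\[
\sum_{j=0}^n |g(X_{T(j)})|^q \;=\; \sum_{i=0}^n N_i\, |g(X_i)|^q, \qquad N_i := |\{j : T(j) = i\}|.
\]
By the geometric Stone lemma applied to $X_i$ as the putative common nearest neighbor of the $X_j$'s, $N_i \le \gamma_p$ almost surely. Taking expectations and using the fact that the $(n+1)$-tuple $(X_0,\ldots,X_n)$ is exchangeable---so $\bb E|g(X_{T(j)})|^q$ is the same for every $j$ and equals $\bb E|g(X_{(1)}(X))|^q$, and $\bb E|g(X_i)|^q = \bb E|g(X)|^q$ for every $i$---yields
\[
(n+1)\,\bb E|g(X_{(1)}(X))|^q \;\le\; \gamma_p\,(n+1)\,\bb E|g(X)|^q,
\]
and dividing by $n+1$ gives the claim.

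The only mild obstacle is making the exchangeability step precise: one must check that the law of $X_{T(j)}$ really coincides with that of $X_{(1)}(X)$ for every $j$, which follows because deleting $X_j$ from $(X_0,\ldots,X_n)$ and taking the nearest neighbor of $X_j$ in the remainder is the same measurable functional of $(X_j;\,\text{rest})$ for every $j$, and the pair $(X_j, \text{rest})$ has the same joint law for all $j$ by the i.i.d.\ assumption. The density assumption is used only to rule out ties; otherwise one would need a tie-breaking rule and the same bound would still hold.
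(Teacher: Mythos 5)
Your proof is correct. Note, however, that the paper does not actually prove Lemma \ref{lem:stone0}: it is stated with attribution as Lemma 9.1 of \cite{biau2015lectures} and used as an imported tool (in the proof of Theorem \ref{thm:bias}), so there is no internal proof to compare against; the relevant comparison is with the cited source, and your argument is essentially the classical one given there. You reduce the expectation of the nearest-neighbor value to a multiplicity count via exchangeability, then invoke the geometric form of Stone's lemma---a single point can be the nearest neighbor of at most $\gamma_p$ points of a finite set, by a cone/cap-packing argument on the sphere $S^{p-1}$---to bound each multiplicity $N_i$ by $\gamma_p$ almost surely. Your bookkeeping via the augmented exchangeable $(n+1)$-tuple and the identity $\sum_{j} |g(X_{T(j)})|^q = \sum_i N_i |g(X_i)|^q$ is a clean way to organize the double counting, and you correctly handle the two delicate points: ties are ruled out almost surely by the density assumption, and the law of $X_{T(j)}$ does not depend on $j$ because the deletion-and-nearest-neighbor map is the same measurable functional applied to an exchangeable tuple. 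One cosmetic remark: your $\pi/3$ angular-separation claim is the standard law-of-cosines argument (two points within angle $<\pi/3$ of $y$ cannot both have $y$ as nearest neighbor once each sees the other), and it is consistent with the paper's Lemma \ref{lem:stone1}, which phrases the constant via cones of angle $\pi/12$; the precise value of $\gamma_p$ is immaterial to the lemma.
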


\begin{lemma}[Stone's Lemma, \cite{stone1977consistent}; \cite{biau2015lectures} Lemma 10.7]
\label{lem:stone1}
Suppose that $X,X_1, \ldots, X_n$ are drawn iid from $N$ (a measure over the Borel $\sigma$-field on $\bb R^p$), and let $X_{(k)}(X)$ denote the $k$NN of $X$ within $X_1,\ldots,X_n$.
Let $v_1, \ldots, v_n$ denote a probability weight vector such that $v_1\ge \ldots \ge v_n$.
Let $g : \bb R^p \rightarrow \bb R$ be a Borel measurable function such that $\bb E|g(X)| < \infty$.  Then
\[
\bb E \sum_{k=1}^n v_k |g (X_{(k)}(X))| \le 2 \gamma_p \bb E |g(X)|,
\]
where $\gamma_p$ is the minimum number of cones of angle $\pi/12$ that cover $\bb R^p$.
\end{lemma}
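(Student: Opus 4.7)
The plan is the classical symmetrization-plus-cone-lemma argument. First I would reindex the weighted sum by recording which $X_i$ plays the role of the $k$-th neighbor: since $X_{(k)}(X) = X_i$ exactly when the rank $R_i$ of $X_i$ with respect to $X$ (among $X_1,\ldots,X_n$) equals $k$,
\[
\bb E \sum_{k=1}^n v_k |g(X_{(k)}(X))| = \sum_{i=1}^n \bb E\bigl[v_{R_i} |g(X_i)|\bigr].
\]
Because $(X, X_1, \ldots, X_n)$ is iid from $\mu$, its joint distribution is invariant under swapping $X$ with $X_i$. After this swap, $R_i$ becomes the rank $S_i$ of $X$ (now treated as one of the data points) relative to $X_i$ (treated as the query) among the set $\{X\} \cup \{X_j : j \ne i\}$, while $|g(X_i)|$ becomes $|g(X)|$. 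Summing,
\[
\bb E \sum_{k=1}^n v_k |g(X_{(k)}(X))| = \bb E\Bigl[|g(X)| \sum_{i=1}^n v_{S_i}\Bigr].
\]

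Second, I would reduce matters to a deterministic bound $\sum_i v_{S_i} \le 2\gamma_p$ that holds pointwise on the underlying sample. This is where Stone's geometric lemma enters: covering $\bb R^p$ by $\gamma_p$ cones of angle $\pi/12$ with vertex at $X$, and using that within each such cone any point closer to $X$ is closer to every other cone point than $X$ itself is, one shows $|\{i : S_i \le K\}| \le 2K \gamma_p$ for every $K$. The factor of $2$ comes from handling ties in Euclidean distance at cone boundaries, which is exactly why one uses the narrow angle $\pi/12$ rather than the more classical $\pi/6$. This geometric cone inequality is the nontrivial content of the lemma; everything else is bookkeeping.

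Third, I would combine the two via Abel summation. Writing $N_K := |\{i : S_i \le K\}|$ and $v_{n+1} := 0$, monotonicity of $v$ and the cone bound yield
\[
\sum_{i=1}^n v_{S_i} = \sum_{k=1}^n v_k (N_k - N_{k-1}) = \sum_{k=1}^n (v_k - v_{k+1}) N_k \le 2\gamma_p \sum_{k=1}^n k(v_k - v_{k+1}) = 2\gamma_p \sum_{k=1}^n v_k = 2\gamma_p,
\]
since $v$ is a probability weight vector. Plugging back into the previous display, $|g(X)|$ pulls out of the expectation and $\bb E|g(X)| < \infty$ by hypothesis, so $\bb E \sum_k v_k |g(X_{(k)}(X))| \le 2 \gamma_p\, \bb E|g(X)|$.

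The main obstacle is the geometric cone inequality $|\{i : S_i \le K\}| \le 2K\gamma_p$: this is the genuinely Euclidean-geometric content of the lemma, and one must be careful to keep the argument valid in the presence of distance ties, which is precisely the role played by the narrow $\pi/12$ angle. By contrast, the exchangeability swap and the Abel summation step are mechanical once this geometric fact is available.
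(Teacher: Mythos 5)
This lemma is never proved in the paper: it is quoted as a known result from \cite{stone1977consistent} and Lemma 10.7 of \cite{biau2015lectures}, so there is no internal proof to compare yours against. Your argument --- the exchangeability swap turning $\bb E\sum_{k} v_k |g(X_{(k)}(X))|$ into $\bb E\bigl[|g(X)|\sum_{i} v_{S_i}\bigr]$, the cone-covering bound $|\{i : S_i \le K\}| \le 2K\gamma_p$, and Abel summation against the monotone probability weights --- is exactly the standard proof of the cited result, and your reindexing and summation-by-parts computations are correct. The one component you assert rather than establish is the geometric cone inequality itself, including the tie-handling responsible for the factor $2$ and the choice of angle $\pi/12$; since that inequality is the entire nontrivial content of Stone's lemma, a fully self-contained proof would need the per-cone counting spelled out (within a cone of angle $\pi/12$ with vertex $X$, any two points $y,z$ of the sample with $0<\|y-X\|\le\|z-X\|$ satisfy $\|y-z\|<\|z-X\|$ even when $\|y-X\|=\|z-X\|$, so the $j$-th closest cone point to $X$ already has $j-1$ sample points beating $X$, and hence at most order $K$ points per cone can place $X$ among their first $K$ neighbors), but you isolate this as the crux and describe its mechanism correctly.
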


\begin{lemma}[\cite{biau2015lectures} Lemma 10.2]
\label{lem:stone2}
Suppose that $X,X_1, \ldots, X_n$ are drawn iid from $N$ (a measure over the Borel $\sigma$-field on $\bb R^p$), and let $X_{(k)}(X)$ denote the $k$NN of $X$ within $X_1,\ldots,X_n$.
Let $q \ge 1$, and $g: \bb R^p \rightarrow \bb R$ be a Borel measurable function such that $\bb E|g(X)|^q < \infty$.  Suppose that the following conditions hold:
\begin{enumerate}
\item[(i)] There is a $C$ such that for every Borel measurable $g$, for all $n \ge 1$,
\[
\bb E \left( \sum_{i=1}^n |W_{ni}(X)| |g(X_i)| \right) \le C \bb E |g(X)|.
\]
\item[(ii)] There is a constant $D \ge 1$ such that for all $n \ge 1$,
\[
\bb P \left\{ \sum_{i=1}^n |W_{ni}(X)| \le D \right\} = 1.
\]
\item[(iii)] For all $a > 0$,
\[
\sum_{i=1}^n |W_{ni}(X)| 1 \{ \| X_i - X \| > a \} \rightarrow 0 \quad \textrm{ in probability.}
\]
\end{enumerate}
Then
\[
\bb E \left| \sum_{i=1}^n W_{ni}(X) \left( g(X_i) - g(X) \right) \right|^q \rightarrow 0.
\]
\end{lemma}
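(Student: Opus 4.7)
The plan is to mimic the standard bracketing proof of the classical Glivenko--Cantelli theorem, but with the empirical measure replaced by $\bb Q_1$. The central structural fact I will exploit is that $\bb Q_1$ is a positive linear functional: from the Voronoi representation \eqref{eq:1NN_voronoi}, $\bb Q_1 \eta = \sum_j \mu_0(S_j) \cdot \eta(X_j)$ with all weights $\mu_0(S_j) \ge 0$. So $\bb Q_1$ preserves the pointwise order: if $l \le \eta \le u$ almost surely (with respect to $\mu_1$, which suffices since the $X_j$ are $\mu_1$-draws), then $\bb Q_1 l \le \bb Q_1 \eta \le \bb Q_1 u$.

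First, fix $\epsilon > 0$ and let $B := N_{[]}(\epsilon, \cl N, \mu_0) < \infty$, so we can choose an $\epsilon$-bracket $\{(l_j,u_j)\}_{j=1}^B$ covering $\cl N$. For each $\eta \in \cl N$, pick its bracket index $j(\eta)$. Using positivity of $\bb Q_1$ and the bracket bounds on the integrals $\int \cdot \dr \mu_0$, a standard two-sided estimate gives
\[
\bigl|\bb Q_1 \eta - \textstyle\int \eta \dr \mu_0 \bigr| \;\le\; \max_{k \in \{l_{j(\eta)}, u_{j(\eta)}\}} \bigl|\bb Q_1 k - \textstyle\int k \dr \mu_0 \bigr| \;+\; \textstyle\int (u_{j(\eta)} - l_{j(\eta)}) \dr \mu_0.
\]
Taking the supremum over $\eta \in \cl N$ collapses the first term to a maximum over a \emph{finite} collection of $2B$ fixed functions and bounds the second term by $\epsilon$:
\[
\sup_{\eta \in \cl N} \bigl|\bb Q_1 \eta - \textstyle\int \eta \dr \mu_0 \bigr| \;\le\; \max_{j \le B} \max_{k \in \{l_j, u_j\}} \bigl|\bb Q_1 k - \textstyle\int k \dr \mu_0 \bigr| \;+\; \epsilon.
\]

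Next, I would apply Theorem~\ref{thm:1NN_SLLN} to each individual $l_j$ and $u_j$: each $\bb Q_1 l_j$ and $\bb Q_1 u_j$ converges in $L^2$, hence in probability, to its $\mu_0$-integral. A finite maximum of quantities that vanish in probability also vanishes in probability, so $\limsup_{n} \sup_{\eta \in \cl N} |\bb Q_1 \eta - \int \eta \dr \mu_0| \le \epsilon$ in probability. Letting $\epsilon \downarrow 0$ along a countable sequence yields the claim.

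The main obstacle is technical rather than conceptual: verifying that the bracket endpoints $l_j, u_j$ themselves satisfy Assumption~\ref{as:measure_2} (a finite $2q_1$ moment under $\mu_1$) so that Theorem~\ref{thm:1NN_SLLN} applies to them. This is implicit in any useful definition of bracketing entropy for this setting; it can be enforced by working with an envelope $F$ for $\cl N$ with $\int F^{2q_1} \dr \mu_1 < \infty$ and truncating the brackets to $[-F, F]$, which preserves both the bracket property and the $\mu_0$-width. A secondary subtlety is that the bracket inequality $l_j \le \eta \le u_j$ must hold $\mu_1$-a.s.\ (not just $\mu_0$-a.s.)\ for the monotonicity of $\bb Q_1$ to apply at the evaluation points $X_j \sim \mu_1$; since $\mu_0 \ll \mu_1$ by Assumption~\ref{as:measure_1}, one can assume without loss that brackets are stated $\mu_1$-almost surely, which is the stronger of the two conditions.
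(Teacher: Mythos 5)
Your proposal does not address the statement at hand. The statement is Lemma \ref{lem:stone2} (Stone's theorem, quoted from Lemma 10.2 of \cite{biau2015lectures}): for weights $W_{ni}(X)$ satisfying hypotheses (i)--(iii), one must show
\[
\bb E \Bigl| \sum_{i=1}^n W_{ni}(X)\bigl(g(X_i) - g(X)\bigr) \Bigr|^q \rightarrow 0.
\]
Nothing in your argument engages with the weights $W_{ni}$, with any of the three hypotheses, or with this conclusion. What you have written is instead a proof of Corollary \ref{cor:1NNGC} (the bracketing/1NN--Glivenko--Cantelli statement): you fix an $\epsilon$-bracket for $\cl N$, use positivity of the weights $\mu_0(S_j)$ to reduce the supremum to a maximum over finitely many bracket endpoints, and invoke Theorem \ref{thm:1NN_SLLN} on each endpoint. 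That is essentially the paper's own proof of Corollary \ref{cor:1NNGC}, but it is a proof of the wrong statement.

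For Lemma \ref{lem:stone2} the required argument is of an entirely different character, and in fact the paper does not prove it at all: it is imported as a known result and used as a black box in the proof of Lemma \ref{lem:two_NN_diff}. A correct proof (as in \cite{biau2015lectures}) runs roughly as follows: approximate $g$ in $L^q(\mu)$ by a bounded uniformly continuous function $\tilde g$; decompose the weighted sum into the error from replacing $g$ by $\tilde g$ at the points $X_i$ (controlled by condition (i), which is precisely a change-of-argument inequality for the random design points), the error $g(X)-\tilde g(X)$ at the query point (controlled by $\bb E|g(X)-\tilde g(X)|^q$ being small), and the remaining term $\sum_i W_{ni}(X)(\tilde g(X_i) - \tilde g(X))$, which is handled by splitting on $\|X_i - X\| \le a$ versus $\|X_i - X\| > a$: the near part is small by uniform continuity of $\tilde g$ together with condition (ii), and the far part vanishes by condition (iii) and boundedness of $\tilde g$. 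None of these ingredients --- the $L^q$ density argument, the role of (i), the near/far split --- appears in your proposal, and the bracketing machinery you use cannot substitute for them, since the quantity to be controlled here is a weighted sum over random design points, not a fixed positive linear functional applied to a function class.
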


\begin{lemma}
\label{lem:two_NN_diff}
Suppose that $X,X_1, \ldots, X_n$ are drawn iid from $N$ (a measure over the Borel $\sigma$-field on $\bb R^p$), and let $X_{(k)}(X)$ denote the $k$NN of $X$ within $X_1,\ldots,X_n$.
Let $q \ge 1$, and $g: \bb R^p \rightarrow \bb R$ be a Borel measurable function such that $\bb E|g(X)|^q < \infty$, then
\[
\bb E \left| g(X_{(1)}(X)) - g(X_{(2)}(X)) \right|^q \rightarrow 0.
\]
\end{lemma}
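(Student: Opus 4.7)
The plan is to reduce the statement to the convergence $\bb E|g(X_{(k)}(X)) - g(X)|^q \to 0$ for $k=1,2$ and then invoke Lemma \ref{lem:stone2}. The reduction is via the elementary $c_q$-inequality: for $q \ge 1$,
\[
|g(X_{(1)}(X)) - g(X_{(2)}(X))|^q \le 2^{q-1}\bigl(|g(X_{(1)}(X)) - g(X)|^q + |g(X_{(2)}(X)) - g(X)|^q\bigr),
\]
so passing through the anchor value $g(X)$ symmetrically splits the task into two parallel pieces.

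For each $k \in \{1,2\}$ I would instantiate Lemma \ref{lem:stone2} with the weight vector $W_{ni}(X) = 1\{X_i = X_{(k)}(X)\}$, so that $\sum_i W_{ni}(X) g(X_i) = g(X_{(k)}(X))$ and the conclusion of the lemma is exactly what is needed. Checking the hypotheses: (i) follows from Stone's inequality (Lemma \ref{lem:stone1}) applied to the monotone probability weights $v_1=\cdots=v_k=1/k$ and $v_j=0$ otherwise, yielding $\sum_{j=1}^k \bb E|g(X_{(j)}(X))| \le 2k\gamma_p \bb E|g(X)|$ and in particular $\bb E|g(X_{(k)}(X))| \le 2k\gamma_p \bb E|g(X)|$; (ii) holds with $D=1$ since $\sum_i |W_{ni}(X)|=1$ a.s.; and (iii) reduces to showing $\|X_{(k)}(X)-X\|\to 0$ in probability, because $\sum_i |W_{ni}(X)|1\{\|X_i-X\|>a\} = 1\{\|X_{(k)}(X)-X\|>a\}$. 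Lemma \ref{lem:stone2} then gives $\bb E|g(X_{(k)}(X)) - g(X)|^q \to 0$ for each $k$, and substituting into the triangle bound completes the proof.

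The only real subtlety is condition (iii), since $\mu$ is allowed to be an arbitrary Borel probability measure with no density assumption. This is standard but worth stating: for every $X$ in the topological support of $\mu$ (which has full $\mu$-measure) and every $a>0$ we have $\mu(B(X,a))>0$, so by the law of large numbers the number of sample points in $B(X,a)$ diverges almost surely, forcing $\|X_{(2)}(X)-X\|<a$ eventually and hence $\|X_{(k)}(X)-X\|\to 0$ a.s.\ for $k=1,2$. Beyond this, the argument is bookkeeping around Stone's machinery, with the only ``choice'' being to smear the probability weight in (i) across the top $k$ positions rather than concentrating it at a single index (which would violate the monotonicity required by Lemma \ref{lem:stone1}).
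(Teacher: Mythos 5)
Your proof is correct and takes essentially the same route as the paper's: both reduce the claim to Lemma \ref{lem:stone2}, verifying condition (i) via Stone's lemma with the weight vector $v_1=v_2=1/2$, (ii) trivially with $D=1$, and (iii) via the a.s.\ convergence of nearest-neighbor distances on the support of $\mu$. The only (cosmetic) difference is that the paper applies Lemma \ref{lem:stone2} once with the signed weights $W_{ni}(X) = \tfrac12\left(1\{ X_{(1)}(X) = X_i \} - 1\{ X_{(2)}(X) = X_i \}\right)$, which sum to zero so the anchor term $g(X)$ cancels and the conclusion is directly $\bb E\left|\tfrac12\bigl(g(X_{(1)}(X)) - g(X_{(2)}(X))\bigr)\right|^q \rightarrow 0$, whereas you apply it twice with indicator weights and recombine through the $c_q$-inequality at the cost of a factor $2^{q-1}$.
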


\begin{proof}
Let $W_{ni}(X) = \frac 12 (1\{ X_{(1)}(X) = X_i \} - 1\{ X_{(2)}(X) = X_i \})$, then letting $v_1 = 1/2, v_2 = 1/2$ we see that condition (i) in Lemma \ref{lem:stone2} holds by Lemma \ref{lem:stone1}.
(ii) holds trivially by selecting $D=1$.
(iii) holds by Lemma 2.2 in \cite{biau2015lectures} which states that for $x \in {\rm supp}(\mu)$, $\| X_{(k)} - x \| \rightarrow 0$ almost surely (for $k/n \rightarrow 0$).
\end{proof}

\section{Proofs of main results}

\begin{proof}[Proof of Lemma \ref{lem:lebesgue_pts}]
Let $\cl A$ be the set of all $x$ such that for some $x' \ne x$, $\Pi(B(x',\| x' - x\|)) = 0$, and call the set of all such balls, $\cl F$.
Let $\cl Z$ be
\[
\cl Z = \bigcup_{B \in \cl F} {\rm int}(B).
\]
Since it is the union of open sets, $\cl Z$ is open, and by the Lindel\"of Covering Theorem, there is a countable subset of $\cl F$, $\cl G$, such that the interiors of the balls cover $\cl Z$.
Thus, by countable subadditivity of measures,
\[
\Pi(\cl Z) \le \Pi\left( \bigcup_{B \in \cl G} {\rm int}(B) \right) \le \sum_{B \in \cl G} \Pi({\rm int}(B)) = 0.
\]
We have that $\cl A \backslash \cl Z$ is $\sigma$-porous which means that there is an $\alpha \in (0,1)$ such that every element $x \in \cl A \backslash \cl Z$ there is an $r_0 > 0$ such that $1 / r_0 \in \bb Z$ where for any $r < r_0$, there exists a $y \in \bb R^p$ with
\[
B(y,\alpha r) \subset B(x,r) \backslash (\cl A \backslash \cl Z).
\]
To see this let $y$ be on the segment between $x'$ and $x$ in the above construction and $\alpha \le 1/2$.
By the Lebesgue differentiation theorem, porous sets have Lebesgue measure $0$ \cite{zajivcek1987porosity}.
Hence, $\Pi(\cl A \backslash \cl Z) = 0$ since $\sigma$-porous sets are countable unions of porous sets, by countable subadditivity, and $\Pi \ll \lambda$.
Let $\cl X^C = \cl A$, and we have that $\Pi(\cl X) \ge 1 - \Pi(\cl A \backslash \cl Z) - \Pi(\cl Z) = 1$.

We will show (2) by supposing its contradiction, that for some $x \in \cl X$ and $\delta > 0$, for every $\gamma >0$ there exists a $x' \ne x$ such that $\| x - x' \| \ge \delta$ and $\Pi(B(x', \|x' - x\|)) \le \gamma$.
This implies that there exists a sequence of points $\{z_l\}_{l=1}^\infty$, such that $\| z_l - x \| = \delta$ and $\Pi(B(z_l,\delta)) \le 1/l^2$.
Define $A_m = \cup_{l=m}^\infty B(z_l,\delta)$ then we have that $\Pi(A_m) \rightarrow 0$ as $m \rightarrow \infty$.
By the Bolzano-Weierstrass theorem there exists an accumulation point of $z_l$, $z'$ with $\|z' - x\| = \delta$ (by continuity of $\|.\|$).
The interior of $B(z',\delta)$ is contained in $A_m$ for all $m$.
By absolute continuity with respect to Lebegue measure, $\Pi({\rm int}(B(z',\delta))) = \Pi(B(z',\delta)) = \Pi(B(z',\|z' - x\|)) >0$ by the fact that $x \in \cl X$.
This contradicts the fact that $\Pi(A_m) \rightarrow 0$.
\end{proof}

\begin{proof}[Proof of Lemma \ref{lem:voronoi}]
Throughout, let $C$ be some constant and $x$ be a Lebesgue point as in Lemma \ref{lem:lebesgue_pts} (for $N $) within ${\rm supp}(M)$.  Let $X_0 \sim M$ and notice that
\[
\bb E[M(S_1) | X_1 = x] = \bb P \{ X_0 \in S_1 | X_1 = x\} = \bb P \left\{ \cap_{i=2}^n \left\{ X_i \notin B(X_0,\| X_0 - x\|) \right\} \right\} = \bb E \left[(1 - Z(x))^{n-1} \right],
\]
where $Z(x) = N (B(X_0,\| X_0 - x\|))$.
By integration by parts,
\[
n \bb E \left[(1 - Z(x))^{n-1} \right] = n \int_0^1 \bb P \left\{ (1 - Z(x))^{n-1} > u \right\} \dr u = n (n-1)\int_0^1 \bb P \left\{ Z(x) \le z \right\} (1 - z)^{n-2} \dr z.
\]

By the Lebesgue differentiation theorem,
\[
\lim_{\delta \rightarrow 0} \sup_{x_0 : \| x - x_0 \| \le \delta} \left| \frac{M(B(x,\|x_0 - x \|))}{\lambda(B(x,\|x_0 - x\|))} - \mu(x)\right| = 0.
\]
Notice that if $\|x_0 - x\| \rightarrow 0$, the sets, $B(x_0,\|x_0 - x \|)$ converges regularly to $x$, in the sense that
\[
B(x_0,\|x_0 - x \|) \subseteq B(x,2 \|x_0 - x \|),
\]
and by the doubling property of $\lambda$,
\[
\lambda(B(x_0,\|x_0 - x \|)) \ge C_{p} \lambda(B(x,2 \|x_0 - x \|)),
\]
where $C_p$ is a constant based on dimension, $p$.  Hence,
\begin{align*}
&\sup_{x_0 : \| x_0 - x \| \le \delta}\left| \frac{N(B(x_0,\|x_0 - x\|))}{\lambda(B(x_0, \|x_0 - x \|))} - \nu(x)\right| \\
&\quad \le \sup_{x_0 : \| x_0 - x \| \le \delta} \frac{\int_{B(x_0,\|x - x_0\|)} \left| \nu(x') - \nu(x) \right| \dr x'}{\lambda(B(x_0,\|x_0 - x \|))} \\
&\quad \le \frac{1}{C_{p} \lambda(B(x,2\delta))} \int_{B(x,2 \delta)} \left| \nu(x') - \nu(x) \right| \dr x' \rightarrow 0,
\end{align*}
as $\delta \rightarrow 0$ again by the LDT.
Because $\lambda(B(x,\|x_0 - x\|)) = \lambda(B(x_0, \|x_0 - x\|))$ we have that,
\begin{equation}
\label{eq:pt_wise}
\lim_{\delta \rightarrow 0} \sup_{x_0 : \| x_0 - x \| \le \delta} \left| \frac{N(B(x_0,\|x_0 - x\|))}{M(B(x, \| x_0 - x\|))} - \frac{\nu(x)}{\mu(x)} \right| = 0.
\end{equation}
For $\gamma > 0$ let $\delta$ be such that for any $x_0$ with $\| x_0 - x \| \le \delta$,
\[
(1 + \gamma)^{-1} \frac{\nu(x)}{\mu(x)} \le \frac{N(B(x_0,\|x_0 - x\|))}{M(B(x, \| x_0 - x\|))} \le (1 - \gamma)^{-1} \frac{\nu(x)}{\mu(x)}.
\]
Let $\eta \in (0,1)$ guaranteed in Lemma \ref{lem:lebesgue_pts} (ii) based on $x,\delta$.
\[
n (n-1) \int_\eta^1 \bb P \{ Z(x) \le z \} (1 - z)^{n-2} \dr z \le n (1 - \eta)^{n-1} \rightarrow 0,
\]
as $n \rightarrow \infty$.

Thus, if we denote $Z_0(x) = M(B(x,\|X_0 - x\|))$,
\begin{align*}
&n (n-1) \int_0^{\eta} \bb P \left\{ \frac{\nu(x)}{\mu(x)} Z_0(x) < (1 - \gamma) z \right\} (1 - z)^{n-2} \dr z \\
&\quad \le n (n-1) \int_0^{\eta} \bb P \{ Z(x) < z \} (1 - z)^{n-2} \dr z \\
&\quad \le n (n-1) \int_0^{\eta} \bb P \left\{ \frac{\nu(x)}{\mu(x)} Z_0(x) < (1 + \gamma) z \right\} (1 - z)^{n-2} \dr z.
\end{align*}
Because $Z_0(x)$ follows a uniform$(0,1)$ distribution then
\begin{align*}
&\lim_{n \rightarrow \infty} n (n-1) \int_0^{\eta} \bb P \left\{ \frac{\nu(x)}{\mu(x)} Z_0(x) < (1 + \gamma) z \right\} (1 - z)^{n-2} \dr z \\
&\quad = \lim_{n \rightarrow \infty} \left( \frac{\mu(x)}{\nu(x)} (1 + \gamma) \right) n (n-1) \int_0^\eta z (1 - z)^{n-2} \dr z \le \frac{\mu(x)}{\nu(x)} (1 + \gamma),\\
\end{align*}
for $n\rightarrow \infty$.
Similarly,
\[
\lim_{n\rightarrow \infty} n (n-1) \int_0^{\eta} \bb P \left\{ \frac{\nu(x)}{\mu(x)} Z_0(x) < (1 - \gamma) z \right\} (1 - z)^{n-2} \dr z \ge \frac{\mu(x)}{\nu(x)} (1 - \gamma).
\]
Hence, by setting $\gamma$ arbitrarily small,
\[
\lim_{n \rightarrow \infty}n (n-1) \int_0^{\eta} \bb P \{ Z(x) < z \} (1 - z)^{n-2} \dr z = \frac{\mu(x)}{\nu(x)}.
\]

In order to establish \eqref{eq:voronoi_sqr}, we will follow a similar procedure.
Let $X_0, X_0' \sim M$ independently.
\begin{align*}
&\bb E[M^2(S_1) | X_1 = x] = \bb P \{ X_0, X_0' \in S_1 | X_1 = x\} \\
&= \bb P \left\{ \cap_{i=2}^n \left\{ X_i \notin B(X_0,\| X_0 - x\|) \cup B(X_0' ,\| X_0' - x\|) \right\} \right\} = \bb E \left[(1 - Z_2(x))^{n-1} \right],
\end{align*}
where $Z_2(x) = N (B(X_0,\| X_0 - x\|) \cup B(X_0',\| X_0' - x\|))$.  Define
\[
\tilde Z_2(x) := \max \{ N (B(X_0,\| X_0 - x\|)), N(B(X_0',\| X_0' - x\|)) \},
\]
then $Z_2(x) \ge \tilde Z_2(x)$.
As before, by integration by parts,
\begin{align*}
&n \bb E \left[(1 - Z_2(x))^{n-1} \right] = n (n-1)\int_0^1 \bb P \left\{ Z_2(x) \le z \right\} (1 - z)^{n-2} \dr z \\
&\quad \le n (n-1)\int_0^1 \bb P \left\{ \tilde Z_2(x) \le z \right\} (1 - z)^{n-2} \dr z.
\end{align*}
By \eqref{eq:pt_wise}, for any $\gamma > 0$ we can select a $\delta$ such that for any $x_0,x_0' \in B(x,\delta)$,
\[
(1 + \gamma)^{-1} \frac{\nu(x)}{\mu(x)} \le \frac{\max\{N(B(x_0,\|x_0 - x\|)),N(B(x_0',\|x_0' - x\|))\}}{\max\{M(B(x, \| x_0 - x\|)),M(B(x, \| x_0' - x\|))\}}.
\]
Let $\eta$ be selected as before,
\[
\int_0^\eta \bb P \left\{ \tilde Z_2(x) \le z \right\} (1 - z)^{n-2} \dr z \le \int_0^\eta \bb P \left\{\frac{\nu(x)}{\mu(x)} Z_3(x) \le (1 + \gamma) z \right\} (1 - z)^{n-2} \dr z
\]
where $Z_3(x) = \max\{M(B(x, \| X_0 - x\|)),M(B(x, \| X_0' - x\|))\}$.
The elements in the maximum are independent uniform$(0,1)$ random variables, and so the maximum has a $\sqrt{U}$ distribution for uniform $U$.
\begin{align*}
&\int_0^\eta \bb P \left\{\frac{\nu(x)}{\mu(x)} Z_3(x) \le (1 + \gamma) z \right\} (1 - z)^{n-2} \dr z \le \int_0^1 \left( \frac{\mu(x)}{\nu(x)}\right)^2 (1 + \gamma)^2 z^2 (1 - z)^{n-2} \dr z \\
&\quad = \left( (1 + \gamma) \frac{\mu(x)}{\nu(x)} \right)^2 \frac{2}{n(n+1)(n-1)}.
\end{align*}
Also, as before
\[
n^2 (n-1) \int_\eta^1 \bb P \left\{ \tilde Z_2(x) \le z \right\} (1 - z)^{n-2} \dr z \rightarrow 0.
\]
Finally, by setting $\gamma$ arbitrarily small
\[
\limsup_{n \rightarrow \infty} n^2 (n-1) \int_0^1 \bb P \left\{ \tilde Z_2(x) \le z \right\} (1 - z)^{n-2} \dr z \le 2 \left( \frac{\mu(x)}{\nu(x)} \right)^2.
\]
\end{proof}

\begin{proof}[Alternative proof of Theorem \ref{thm:bias}]
Consider 
\[
\bb E [\bb Q_1(\eta)] = \bb E \left[\eta(X_1) \cdot n \bb E[ M(S_1) | X_1 ] \right].
\]
We have pointwise convergence by \eqref{eq:voronoi},
\[
\eta(X_1) \cdot n \bb E \left[ M(S_1) | X_1 \right] \rightarrow \frac{\mu(X_1)}{\nu(X_1)} \eta(X_1),
\]
almost everywhere, and the RHS has expectation $\int \eta \mu$.
We can establish dominated convergence by
\begin{align}
&\bb Q_1(\eta) = \int \eta(X_{(1)}(x)) \frac{\mu(x)}{\nu(x)} \nu(x) \dr x \nonumber \\
&\le \left( \int \left( \frac{\mu(x)}{\nu(x)}\right)^{q_0} \nu(x) \dr x \right)^{\frac{1}{q_0}} \cdot \left( \int \eta^{q_1}(X_{(1)}(x)) \nu(x) \dr x \right)^{\frac{1}{q_1}}
\label{eq:holder_Q1}
\end{align}
where $q_0,q_1$ are H\"older conjugates.
By assumption the first term on the RHS is bounded, what remains is to bound the second term.
This can be established using theory developed primarily in \cite{stone1977consistent}.
A direct application of Lemma \ref{lem:stone0} to \eqref{eq:holder_Q1} concludes our proof.
\end{proof}

\begin{proof}[Proof of Lemma \ref{lem:var_bd}]
We will appeal to the Efron-Stein inequality, which states the following:
Let $\bo X' = (X'_1,\ldots,X'_n)$ be an iid copy of $\bo X = (X_1,\ldots,X_n)$ and $\bo X^{(i)} = (X_1,\ldots,X_{i-1},X_i',X_{i+1},\ldots,X_n)$, then for any function $F(\bo X)$
\[
\bb V F(\bo X) \le \frac 12 \sum_{i=1}^n \bb E [F(\bo X) - F(\bo X^{(i)})]^2.
\]
Let $F(\bo X) = \bb Q_1 (\eta)$ and denote $\bb Q^{(i)}$ as the 1NN measure formed from the data, $\bo X^{(i)}$.
Due to exchangeability,
\[
\frac 12 \sum_{i=1}^n \bb E [\bb Q_1 (\eta) - \bb Q^{(i)}(\eta)]^2 = \frac n2 \bb E [\bb Q_1 (\eta) - \bb Q^{(1)}(\eta)]^2.
\]
Let $X^-_{(1)}(x)$ and $\bb Q^-$ denote the 1NN within and the 1NN measure formed from the reduced data $X_2,\ldots,X_n$.
We have that
\[
\bb E [(\bb Q_1 - \bb Q^{(1)})(\eta)]^2 \le \left((\bb E[(\bb Q_1 - \bb Q^-)(\eta)]^2)^{\frac 12} + (\bb E [\bb (\bb Q^- - \bb Q^{(1)})(\eta)]^2)^{\frac 12} \right)^2 = 4 \bb E[(\bb Q_1 - \bb Q^-)(\eta)]^2.
\]
In order for $\eta(X^-_{(1)}(x) )$ to differ from $\eta(X_{(1)}(x) )$ it must be that $X_{(1)}(x) = X_1$ and $X^-_{(1)}(x) = X_{(2)}(x)$.
Thus,
\[
\bb Q_1(\eta) - \bb Q^-(\eta) = \int 1\{ X_{(1)}(x) = X_1 \} \left[ \eta(X_1) - \eta(X_{(2)}(x)) \right] \mu(x) \dr x,
\]
and so,
\begin{align*}
&\bb E[\bb Q_1(\eta) - \bb Q^-(\eta)]^2 \le \bb E \int 1\{ X_{(1)}(x) = X_1 \} \left[ \eta(X_1) - \eta(X_{(2)}(x)) \right]^2 \mu(x) \dr x \\
&= \frac 1n \sum_{j=1}^n \bb E \int 1\{ X_{(1)}(x) = X_j \} \left[ \eta(X_j) - \eta(X_{(2)}(x)) \right]^2 \mu(x) \dr x\\
&= \frac 1n \bb E \int \left[ \eta(X_{(1)}(x)) - \eta(X_{(2)}(x)) \right]^2 \mu(x) \dr x.
\end{align*}
Let $f = \mu / \nu$ be the density ratio.
Considering this term,
\begin{align*}
&\int \left[ \eta(X_{(1)}(x)) - \eta(X_{(2)}(x)) \right]^2 \mu(x) \dr x = \int \left[ \eta(X_{(1)}(x)) - \eta(X_{(2)}(x)) \right]^2 f(x) \nu(x) \dr x \\
&\le \left(\int f(x)^{q_0} \nu(x) \dr x \right)^{\frac{1}{q_0}} \left(\int \left[ \eta(X_{(1)}(x)) - \eta(X_{(2)}(x)) \right]^{2 q_1} \nu(x) \dr x \right)^{\frac{1}{q_1}}.
\end{align*}
\end{proof}

\begin{proof}[Proof of Theorem \ref{thm:noiseless_NNM}]
The random vector $( m \hat M(S_j) )_{j=1}^n$ is multinomial$( m, (M(S_j) )_{j=1}^n)$ conditional on $\cl X_N$.
The MSE
\[
\bb E[ \hat G - G]^2 \le \bb E[ \hat G - \bb Q_1(\eta)]^2 + \bb V [ \bb Q_1(\eta) ].
\]
The second term converges to $0$ by Theorem \ref{thm:1NN_SLLN}.
\[
\bb E[ \hat G - \bb Q_1(\eta)]^2 = \bb E [\bb V[ \hat G | \cl X_N]].
\]
The conditional variance is
\begin{eqnarray*}
&\bb V[ m \hat G | \cl X_N] = \sum_{j,j' = 1}^n {\rm Cov} (m \hat M(S_j), m \hat M(S_{j'}) | \cl X_N) \eta(X_j) \eta (X_{j'})\\
&= \sum_{j=1}^n m M(S_j) \eta^2(X_j) - \left( \sum_{j,j' = 1}^n M(S_j) M(S_{j'}) \eta(X_j) \eta (X_{j'}) \right)\\
&\le \sum_{j=1}^n m M(S_j) \eta^2(X_j) = m \bb Q_1 (\eta^2).
\end{eqnarray*}
Hence, under Assumptions \ref{as:measure_1}, \ref{as:measure_2},
\[
\bb E [\bb V[ \hat G | \cl X_N]] \le \frac 1m \bb E [\bb Q_1 (\eta^2)] \rightarrow 0.
\]
by Theorem \ref{thm:bias}.
(Notice that Theorem \ref{thm:bias} only requires the $q_1$ moment bound of the test function, which is satisfied for $\eta^2$ by Assumption \ref{as:measure_2}.)
\end{proof}

\begin{proof}[Proof of Theorem \ref{thm:noisy_NNM}]
Define $\tilde G = \sum_j \hat M(S_j) \cdot \eta(X_j)$ ($\hat G$ in the noiseless setting).
Let $\cl X = \cl X_N \cup \cl X_M$ be all of the covariates,
\[
\bb E [\hat G - G]^2 \le \bb E[\bb E [(\hat G - \tilde G)^2 | \cl X]] + \bb E[ \tilde G - G]^2.
\]
The last term converges to $0$ by Theorem \ref{thm:noiseless_NNM}.
The inner term is dominated because
\[
\bb E [(\hat G - \tilde G)^2 | \cl X] \le V \sum_j \hat M^2(S_j) \le V,
\]
because $\sum_j \hat M^2(S_j) \le \sum_j \hat M(S_j) = 1$.
Consider 
\[
\bb E \left[\sum_j \hat M^2(S_j)\right] = n \bb E[\hat M^2(S_1)] = n \bb E \left[ \bb E [\hat M^2(S_1) | \cl X_N] \right].
\]
Because $m \hat M(S)$ is binomial$(m,M(S))$ for fixed $S$ we have that,
\[
\bb E [\hat M^2(S_1) | \cl X_N] = \frac{1}{m^2} \cdot m (m-1) M^2(S_1) + \frac 1m M(S_1).
\]
Since $n \bb E [M(S_1)] = 1$ we have that $ \frac nm \bb E [M(S_1)] \rightarrow 0$ if $m \rightarrow \infty$.
We have by Lemma \ref{lem:voronoi} and dominated convergence,
\[
\lim\sup_{n \rightarrow \infty} n^2 \bb E \left[ \bb E [M^2(S_1) | X_1] \right] \le 2 \int \left(\frac{\mu(x)}{\nu(x)}\right)^2 \nu(x) \dr x.
\]
Hence,
\[
n \bb E[ \hat M^2(S_1)] \rightarrow 0.
\]
\end{proof}

\end{document}